\title{Computing character tables and Cartan matrices of finite monoids with fixed point counting.}
\author{Balthazar Charles\footnote{LISN, Université Paris-Saclay, 6 Rue Noetzlin, Gif-sur-Yvette, 91190, France}}
\date{}
\begin{document}
	
	%\begin{frontmatter}
		
		\maketitle
						
		\begin{abstract}
			%% Text of abstract
			In this paper we present an algorithm for efficiently counting fixed points in a finite monoid $M$ under a conjugacy-like action. We then prove a formula for the character table of $M$ in terms of fixed points and radical, which allows for the effective computation of the character table of $M$ over a field of null characteristic, as well as its Cartan matrix, using a formula from [Thiéry '12], again in terms of fixed points. We discuss the implementation details of the resulting algorithms and provide benchmarks of their performances.
		\end{abstract}
		
	%\end{frontmatter}
	
	\section*{Introduction}
	
	In the remainder of this paper, $\kbf$ is a perfect field of null characteristic and $M$ is still a finite monoid. Furthermore, we suppose that $\kbf$ is "big enough", meaning that if not algebraically closed, at least a splitting field for the characteristic polynomials of the elements of $M$ seen as linear maps on $\kbf M$.

We will discuss the representation theory of a monoid $M$ over $\kbf$ using the language of modules. That is, a representation of $M$ over $\kbf$ will be a $\kbf$-vector space $V$ equipped with a linear action of the algebra $\kbf M$. As is usage, whenever the action is on the left we will say that $V$ is a $\kbf M-\module$ and a $\module - \kbf M$ if the action is on the right. If $M$, $M'$ are two finite, possibly different monoids, a $\kbf M - \module - \kbf M'$ is simply simultaneously a $\kbf M-\module$ and a $\module-\kbf M'$. For a monoid $M$, we denote by $M^{op}$ the \emph{opposite} monoid, with multiplication defined by $m\cdot_{M^{op}}m' = m'm$. We will use liberally the fact that a $\kbf M - \module - \kbf M'$ is naturally a $\kbf M \otimes \kbf M'^{op} - \module$ and a $\kbf(M \times M'^{op}) - \module$, and reciprocally. In the totality of this paper, we assume that the modules are finite dimensional as vector spaces over $\kbf$. Because of this, the Jordan-Hölder Theorem applies and the set of composition factors counted with multiplicities of a module is independent of the choice of a composition series. If $S$ is a $\kbf M$-module and $S$ is a simple $\kbf M$-module, we denote by $[V:S]$ the multiplicity of $S$ as a composition factor of $V$.

In this section, we deal with monoid representation theory, with the goal in mind to compute the character table of $M$. Using the Munn-Clifford-Ponizovskii, this can largely be reduced to group representation theory. Stated differently, the representation theory of a monoid $M$ is an extension of the representation theory of certain groups embedded in $M$. The groups in question are precisely the groups of Definition \ref{dftn:idem_and_maxgrp}.
In the first part of this section, we use this fact to find a description of an $\lc$-class containing an idempotent $e$ quotiented by its radical as a product of simple $\kbf G_e$-modules and simple $\kbf M$-modules. In the second part, we translate this decomposition in terms of characters, which gives us the formula we seek.
Finally, we recall and discuss the formula for the Cartan matrix from Thiéry \cite{Thiery.CartanMatrixMonoid}.
	
	\section{Combinatorics of fixed point counting.}
	In this section we first recall essential and elementary results on the Green structure of finite monoids and on \schu groups. The informed reader may skip this first paragraph, with the exception of the notations (\ref{not:times}) that are used throughout this paper. We then use these results to devise a fixed point counting method.

%\begin{dftn}
%	A finite monoid is a triple $(M, \cdot, 1_M)$ where $M$ is a finite set, $\cdot : M \times M \rightarrow M$ is an associative internal binary composition on $M$ and $1_M$ is the \emph{neutral element of $M$} satisfying $1_Mx=x1_M=x$ for any $x\in M$. As per usual convention, we will designate monoids by their underlying set and simply note $1$ for the neutral element.
%\end{dftn}

In the totality of this paper, we assume that all monoids are finite. We will often use the following special case of finite monoid to illustrate the various results presented hereafter.

%\begin{lined}
	\begin{dftn}[The full transformation monoid]
		Consider the set $T_n$ of all transformations of the set $\{1, \dots, n\}$, equipped with the multiplication given by map composition: $\forall f,g \in T_n, fg = f \circ g$.
		This is a monoid, aptly named the \emph{full transformation monoid}. A submonoid of $T_n$ is called a transformation monoid and $n$ is called its \emph{rank}.
	\end{dftn}
	
%	\begin{ex}
%		For a field $\kbf$ and $n \in \NN$, $(M_n(\kbf), \times)$ is a monoid.
%	\end{ex}
%	
%	\begin{ex}
%		$\symm_n$, and more generally any finite group, is a monoid.
%	\end{ex}
%\end{lined}

	\subsection{Green structure and \schu groups}\label{sec:green}
		Although finite monoids have been considered to be much wilder objects than groups, it turns out that, with the right optics, they are actually highly structured by their internal multiplication.
	Consider the divisibility relation: $x$ divides $y$ if $x = yz$ for some $z$. If $x, y, z$ are taken in a group $G$, the relation is trivial. If however, we take them in a general monoid $M$, left or right translation by an arbitrary element need not be surjective, making the question of $x \in M$ being a left (or right) multiple of $y \in M$ non-trivial. These questions of "divisibility" in a general monoid are studied under the name of Green structure, of which we give a brief overview necessary for our purpose in the subsection below. In the following subsection, we also present the related notion of \schu groups.
	
	%, which we then exploit in the second subsection, where we devise a method for counting fixed-points verifying $s = hsk$ for a given pair $(h, k) \in M^2$.
	
	\subsubsection{Green Structure}
	
	\begin{dftn}[Green's relations] \label{def:green_relations}
		Let $M$ be a finite monoid and $a, b$ two of its elements. The \emph{Green relations} are:
		\begin{itemize}
			\item The $\lc$ preorder is defined by $a \leql b \Leftrightarrow b=ua$ for some $u\in M$. The associated equivalence relation is : $a \in \lc(b) \Leftrightarrow a \leql b \textrm{ and } b \leql a \Leftrightarrow Ma = Mb$.
			\item The $\rc$ preorder is defined by $a \leqr b \Leftrightarrow b=av$ for some $v\in M$. The associated equivalence relation is : $a \in \rc(b) \Leftrightarrow a \leqr b \textrm{ and } b \leqr a \Leftrightarrow aM = bM$.
			\item The $\jc$ preorder is defined by $a \leqj b \Leftrightarrow b=uav$ for some $u, v\in M$. The associated equivalence relation is: $a \in \jc(b) \Leftrightarrow a \leqj b \textrm{ and } b \leqj a \Leftrightarrow MaM = MbM$.
			\item The $\hc$ equivalence relation is defined by $a \in \hc(b) \Leftrightarrow a \in \lc(b) \textrm{ and } a \in \rc(b)$.
		\end{itemize}
	\end{dftn}
	
	It can be proven (see for instance \cite[Theorem 1.9]{pin}) that in finite monoids, the relation $\jc$ is the smallest equivalence relation containing $\rc$ and $\lc$. This is not true in general, and this smallest relation is usually denoted by $\dc$ in the literature. Since we are only interested in finite monoids, we shall only use the terms $\jc$-relation, $\jc$-class, etc. From the definition, it is clear that the relation $\lc$ and $\rc$ are finer than $\jc$ and that $\hc$ is finer that both $\lc$ and $\rc$. Because of this, "the $\lc$-class of some $\hc$-class $H$" or "the $\jc$-class of some $\rc$-class $R$", etc... are well-defined and we shall denote them by $\lc(H), \jc(R),$ etc.
	
	\begin{lined}
		\begin{ex}[Green relations in $T_n$]\label{ex:green_tn}
			Let $a, b$ be two elements of $M$.
			\begin{itemize}
				\item If $a \lc b$, if and only if they have the same \emph{kernel} $\ker a = \{a\inv\{i\} \sepp i \in \intint{1,  n} \}$. We also say that $a$ and $b$ have the same nuclear equivalence.
				\item If $a \rc b$, if and only if they have the same image, $\im(a) = \im(b)$.
				\item Since $T_n$ is finite, $\jc$ is generated by $\lc$ and $\rc$ so $a\jc b$ if and only if $\im(a)$ and $\im(b)$ (or equivalently $\ker(a)$ and $\ker(b)$) have the same cardinality.
				\item Since $\hc$ is the intersection of $\lc$ and $\rc$, $a \hc b$ if and only $a$ and $b$ have the same image and the same kernel.
			\end{itemize}
			These conditions are necessary conditions in any transformation monoid. To get that they are sufficient, we use the fact that $\symm_n \subset T_n$ and that we can rearrange both image and kernel as we please.
			
			These relations are illustrated in the case of the monoid $T_3$ in Figure \ref{fig:green_t3}.
		\end{ex}
	\end{lined}	
			
	\begin{figure}[h!]
	\centering
	\begin{tikzpicture}[scale = 0.9]
	\tikzstyle{fleche}=[->,>=latex,rounded corners=4pt]
	\node at (2.2,0.25){$\jc$(1 2 3)};
	\node[font = \bfseries] at (0,0){1 2 3};
	\node at (0,-1/2){2 3 1};
	\node at (0,-1){3 1 2};
	\node at (1,0){2 1 3};
	\node at (1,-1/2){1 3 2};
	\node at (1,-1){3 2 1};

	\draw[black] (-.5,.3) to (1.5, .3);
	\draw[black] (-.5,.3) to (-.5, -1.3);
	\draw[black] (1.5,.3) to (1.5, -4.6);
	\draw[black] (-.5,-1.3) to (4.5, -1.3);
	
	\draw[fleche, red] (1.5,-1/2) -| (2, -1.3);
	\draw[fleche, red] (1.5,-1/2) -| (3, -1.3);
	\draw[fleche, red] (1.5,-1/2) -| (4, -1.3);
	\draw[fleche, green] (.5,-1.3) |- (1.5, -1.85);
	\draw[fleche, green] (.5,-1.3) |- (1.5, -2.95);
	\draw[fleche, green] (.5,-1.3) |- (1.5, -4.05);
	
	\node at (5.2,-1.6+0.25){$\jc$(1 2 2)};
	\draw[->,>=latex, bend left] (3,0.25) to (5.2,-1);
	\node[font = \bfseries] at (2,-1.6){1 2 2};
	\node at (2,-2.1){2 1 1};
	\node[font = \bfseries] at (3,-1.6){1 2 1};
	\node at (3,-2.1){2 1 2};
	\node at (4,-1.6){1 1 2};
	\node at (4,-2.1){2 2 1};
	
	\draw[black] (1.5, -2.4) to (4.5, -2.4);
	
	\node[font = \bfseries] at (2,-2.7){1 3 3};
	\node at (2,-3.2){3 1 1};
	\node at (3,-2.7){1 3 1};
	\node at (3,-3.2){3 1 3};
	\node[font = \bfseries] at (4,-2.7){1 1 3};
	\node at (4,-3.2){3 3 1};
	
	\draw[black] (1.5, -3.5) to (4.5, -3.5);
	
	\node at (2,-3.8){2 3 3};
	\node at (2,-4.3){3 2 2};
	\node[font = \bfseries] at (3,-3.8){3 2 3};
	\node at (3,-4.3){2 3 2};
	\node[font = \bfseries] at (4,-3.8){2 2 3};
	\node at (4,-4.3){3 3 2};
	
	\draw[black] (1.5, -4.6) to (5.5, -4.6);
	\draw[black] (2.5, -1.3) to (2.5, -4.6);
	\draw[black] (3.5, -1.3) to (3.5, -4.6);
	\draw[black] (4.5, -1.3) to (4.5, -6.4);
	
	\draw[rounded corners=4pt, red] (2, -4.6) |- (4.55, -6.7);
	\draw[rounded corners=4pt, red] (3, -4.6) |- (4.55, -6.7);
	\draw[rounded corners=4pt, red] (4, -4.6) |- (4.55, -6.7);
	\draw[fleche, red] (4.5, -6.7) -| (5, -6.4);
	\draw[rounded corners=4pt, green] (4.5, -1.85) -| (6.2, -4.6);
	\draw[fleche, green] (6.2, -4.6) |- (5.5, -4.9);
	\draw[fleche, green] (6.2, -4.6) |- (5.5, -5.5);
	\draw[rounded corners=4pt, green] (4.5, -2.95) -| (6, -4.6);
	\draw[fleche, green] (6, -4.6) |- (5.5, -4.9);
	\draw[fleche, green] (6, -4.6) |- (5.5, -6.1);
	\draw[rounded corners=4pt, green] (4.5, -4.05) -| (5.8, -4.6);
	\draw[fleche, green] (5.8, -4.6) |- (5.5, -6.1);
	\draw[fleche, green] (5.8, -4.6) |- (5.5, -5.5);
	
	\node at (6.2,-6.4){$\jc$(1 1 1)};
	\draw[->,>=latex, bend left] (5.7,-1.6) to (6.4,-6.1);
	\node[font = \bfseries] at (5, -4.9){1 1 1};
	\draw[black] (4.5, -5.2) to (5.5, -5.2);
	\node[font = \bfseries] at (5, -5.5){2 2 2};
	\draw[black] (4.5, -5.8) to (5.5, -5.8);
	\node[font = \bfseries] at (5, -6.1){3 3 3};
	
	\draw[black] (5.5, -4.6) to (5.5, -6.4);
	\draw[black] (4.5, -6.4) to (5.5, -6.4);
	
	\end{tikzpicture}
	\caption[Green relations in $T_3$.]{Green relations in $T_3$. \\ {\small Each block is a $\jc$-class, each line is a $\rc$-class, each column a $\lc$-class and each case an $\hc$-class. The red, green and black arrows represent the $\lc$, $\rc$ and $\jc$-order respectively.}}
\end{figure}\label{fig:green_t3}
			
	The following notations will prove useful, as the study of Green relations is, in part, the study of the maps given by left and right translations in the monoid.
	\begin{notation}\label{not:times}
		Let $h, k$ be elements of $M$ and $S$ be a subset of $M$. We denote by:
		\begin{itemize}
			\item $h\mul{S}$ the application from $S$ to $hS$ defined by $s \mapsto hs$,
			\item $\mul{S}k$ the application from $S$ to $Sk$ defined by $s \mapsto sk$,
			\item ${}_M\stab(S) = \{m \in M \sepp mS = S\}$,
			\item $\stab_M(S) = \{m \in M \sepp Sm = S\}$,
			\item $\fix_S(h, k) = \{s \in S \sepp hsk=s\}$.
		\end{itemize}
	\end{notation}
	
	Using these notations, let us recall Green's Lemma, which is one of the central elements of the theory of Green relations, as it shows that the structure of the relations is actually heavily constrained, making their study practical.
	
	\begin{lemme}[Green's Lemma]\label{lem:green}
		Let $a, a'$ be two elements in the same $\lc$-class and let $\lambda, \lambda'$ such that $\lambda a = a'$ and $\lambda' a' = a$. Then $\lambda\mul{\rc(a)}$ and $\lambda'\mul{\rc(a')}$ are reciprocal bijections. Moreover, for any $\lc$-class $L$ $\lambda\mul{\rc(a)\cap L}$ and $\lambda'\mul{\rc(a')\cap L}$ are reciprocal bijections.
		
		Similarly, if $a, b$ are two elements in the same $\rc$-class and $\rho, \rho'$ are such that $\rho a = b$ and $\rho' b = a$, then $\mul{\lc(a)}\rho$ and $\mul{\lc(b)}\rho'$ are reciprocal bijections. Moreover, for any $\rc$-class $R$ $\mul{\lc(a)\cap R}\rho$ and $\mul{\lc(b)\cap R}\rho'$ are reciprocal bijections.
	\end{lemme}
	
	\begin{figure}[h!]
	\centering
	\begin{tikzpicture}[scale = 0.9]
	\tikzstyle{fleche}=[->,>=latex,rounded corners=4pt]
	\node at (0,0){$a'$};
	\node at (0,3){$a$};
	\node at (5,0){$b'$};
	\node at (5,3){$b$};
	\draw (-.5,3.5) -- (-.5, -.5);
	\draw (.5,3.5) -- (.5, -.5);
	\draw (5.5,3.5) -- (5.5, -.5);
	\draw (4.5,3.5) -- (4.5, -.5);
	\draw (-.5,3.5) -- (5.5, 3.5);
	\draw (-.5,2.5) -- (5.5, 2.5);
	\draw (-.5,.5) -- (5.5, .5);
	\draw (-.5,-.5) -- (5.5, -.5);
	
	\draw[->, >=latex, bend right] (-.5,3) to (-.5,0);
	\node at (-1.7, 1.5){$\lambda\mul{\rc(a)}$};
	\draw[->, >=latex, bend right] (5.5,0) to (5.5,3);
	\node at (6.7, 1.5){$\lambda'\mul{\rc(a')}$};
	\draw[->, >=latex, bend left] (0,3.5) to (5,3.5);
	\node at (2.5, 4.6){$\mul{\lc(a)}\rho$};
	\draw[->, >=latex, bend left] (5,-.5) to (0,-.5);
	\node at (2.5, -1.6){$\mul{\lc(b)}\rho'$};
	
	\node at (-1.2, 3){$\rc(a)$};
	\node at (-1.2, 0){$\rc(a')$};
	\node at (0, 4){$\lc(a)$};
	\node at (5, 4){$\lc(b)$};
	
	\end{tikzpicture}
	\caption{Green's Lemma}
\end{figure}
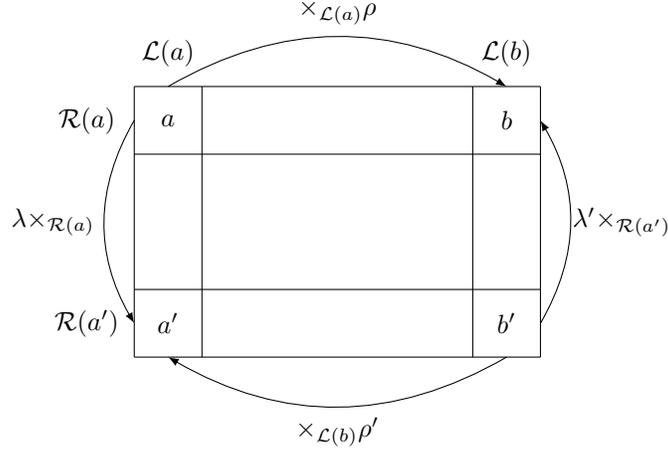
	
	An important consequence of Green's Lemma is that $\jc$-classes can be neatly organized as \emph{eggbox pictures}\footnote{Terminology introduced in \cite{clifford1961algebraic}}: the $\jc$-class can be represented as a rectangular array with the $\lc$-classes as columns, the $\rc$-classes as rows and the $\hc$-classes, the eggs, in the cases, as can be seen in Figure \ref{fig:green_t3}. This level organization is actually what allows for efficient computer representation of monoids and most of their algorithmic exploration. 
	
	\subsubsection{\schu groups}
	
	The Green structure offers a second way of facilitating computer exploration of monoids through groups that arise as stabilizers of some Green classes. These are called the \emph{\schu groups} and -- this a running theme of monoid theory -- allow for a number of monoid theoretic questions to be formulated in terms of groups for which we dispose of an array of efficient algorithms.
	
	\begin{dftn}[\schu groups]\label{def:schu_groups}
		Let $H$ be an $\hc$-class. The set $\{ h\mul{H} \sepp h \in {}_M\stab(H)\}$ equipped with map composition is a subgroup of $\symm(H)$ called the \emph{left \schu group} and denoted by $\Gamma(H)$.
		
		Similarly, $(\{ \mul{H}k \sepp k \in \stab_M(H)\}, \circ)$ is a subgroup of $\symm(H)$ called the \emph{right \schu group} and denoted by $\Gamma'(H)$.
	\end{dftn}

	\begin{lined}
		\begin{ex}\label{ex:schu_as_symm}
			Consider $H = \hc([1\ 3\ 1])$ (the elements of $T_n$ are given in function notation in all examples). We have :
			\[{}_M\stab(H) = \{[1\ 2\ 3], [3\ 2\ 1], [1\ 3\ 3], [3\ 1\ 1], [1\ 1\ 3], [3\ 3\ 1]\}\]
			and subsequently, $\Gamma(H) = \{[1\ 2\ 3]\mul{H}, [3\ 3\ 1]\mul{H}\}$. Notice that, as elements of $\Gamma(H)$, $[3\ 3\ 1]\mul{H} = [3\ 2\ 1]\mul{H}$ and that the only important thing is the permutations induced by the elements of ${}_M\stab(H)$ on $\im H$. Thus, in the case of transformation monoids, the left \schu group of an $\hc$-class $H$ can be represented as a subgroup of $\symm(\im H)$. In the same way, the right \schu groups can be represented as subgroups of $\symm(\ker H)$. This fact is used to represent the \schu groups in Section \ref{sec:Algos}.
		\end{ex}
	\end{lined}

	Our precedent remark on exploiting \schu groups to get efficient algorithms for computational monoid theoretic questions is seconded by the fact that \schu groups do not contain any "superfluous information" in the following sense.
	
	\begin{prop}\label{prop:schu_acts_freely}
		Let $H$ be an $\hc$-class. The natural actions of  $\Gamma(H)$ and $\Gamma'(H)$ on $H$ are free and transitive.
	\end{prop}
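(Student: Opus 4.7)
The plan is to prove both transitivity and freeness for $\Gamma(H)$ via direct applications of Green's Lemma; the argument for $\Gamma'(H)$ is entirely symmetric, using the right-multiplication half of the lemma.

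For transitivity, I would take $a, b \in H$ and exploit that $H \subseteq \lc(a)$, so $a \lc b$ and there exists $\lambda \in M$ with $\lambda a = b$. The key is to verify that $\lambda \in {}_M\stab(H)$, i.e.\ that $\lambda H = H$. Since $a \lc b$, there is also some $\lambda'$ with $\lambda' b = a$, so Green's Lemma applies with $a' = b$. The "moreover" clause, applied to the single $\lc$-class $L = \lc(a) = \lc(b)$, then tells us that $\lambda\mul{\rc(a)\cap L}$ is a bijection onto $\rc(b) \cap L$, i.e.\ that $\lambda$ restricts to a bijection from $H = \rc(a) \cap \lc(a)$ to $\rc(b) \cap \lc(b) = H$. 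Thus $\lambda\mul{H} \in \Gamma(H)$ and by construction it sends $a$ to $b$.

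For freeness, suppose $h \in {}_M\stab(H)$ admits a fixed point $s \in H$; the goal is to show $h\mul{H} = \mathrm{id}_H$. I would invoke Green's Lemma once more, now with $a = a' = s$, $\lambda = h$, and crucially $\lambda' = 1_M$, which is allowed since $hs = s$ and $1_M \cdot s = s$. The lemma gives that $h\mul{\rc(s)}$ and $1_M\mul{\rc(s)} = \mathrm{id}_{\rc(s)}$ are reciprocal bijections of $\rc(s)$, forcing $h\mul{\rc(s)} = \mathrm{id}_{\rc(s)}$. Restricting to $H \subseteq \rc(s)$ yields $h\mul{H} = \mathrm{id}_H$, which is exactly freeness of the action.

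The main subtlety I anticipate is the freeness step: one must notice that Green's Lemma permits $\lambda' = 1_M$ (because the hypothesis only requires $\lambda' a' = a$, not that $\lambda'$ be the "inverse" of $\lambda$ in any group-theoretic sense), and it is this choice that collapses the permutation $h\mul{H}$ to the identity. Everything else — well-definedness of $\Gamma(H) \hookrightarrow \symm(H)$ as a subgroup, and the symmetric argument for $\Gamma'(H)$ — is straightforward bookkeeping once transitivity and freeness are in hand.
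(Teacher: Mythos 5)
Your proof is correct, and the transitivity half is exactly the paper's argument: produce $\lambda$ with $\lambda a = b$ and use Green's Lemma to conclude $\lambda \in {}_M\stab(H)$, so that $\lambda\mul{H} \in \Gamma(H)$ realizes the required transport. Where you diverge is in the freeness step. The paper argues directly: if $uh = h$ and $h'$ is any other element of $H$, then since $h \rc h'$ one can write $h' = hv$, whence $uh' = uhv = hv = h'$; so an element of $\Gamma(H)$ fixing one point fixes all of $H$ and is therefore the identity permutation. You instead feed the degenerate data $a = a' = s$, $\lambda = h$, $\lambda' = 1_M$ into Green's Lemma and read off that $h\mul{\rc(s)}$ is reciprocal to $\Id_{\rc(s)}$, hence equal to it. This is a valid application — the lemma's hypotheses only require $\lambda a = a'$ and $\lambda' a' = a$, both satisfied here — and it even yields the slightly stronger conclusion that $h$ acts as the identity on the whole $\rc$-class, not just on $H$. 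The trade-off is mostly aesthetic: the paper's computation is self-contained and is, in substance, the proof of Green's Lemma specialized to this situation, whereas your version treats Green's Lemma as a black box at the cost of the mildly counterintuitive choice $\lambda' = 1_M$. Both are complete; no gap.
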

	
	We reproduce below a proof for Proposition \ref{prop:schu_acts_freely} from \cite{schu} for the purpose of showcasing the main argument. The argument itself is widely known and we will use it multiple times in the remainder of this paper.
	
	\begin{proof}
		 Two elements $h, h' \in H$ are in the same $\lc$-class so there is some $u \in M$ such that $uh=h'$. By Green's Lemma, this means that $u \in {}_M\stab(H)$, so $\Gamma(H)$ acts transitively on $H$. Suppose that $uh = h$ for some $u\in M$. Since $h, h'$ are also in the same $\rc$ class, there is some $v$ such that $h' = hv$, so $uh'=uhv=hv=h'$ : an element of $\Gamma(H)$ either fixes all points in $H$ or fixes none. The only element of $\Gamma(H)$ that fixes all points (and, consequently, the only one that fixes any point) is the identity and thus the action is free. The same arguments apply for $\Gamma'(H)$.
	\end{proof}
	
	A special case that is interesting to note, and that will be important later, it the case where $H$ is the $\hc$-class of an idempotent :
	
	\begin{dftn}\label{dftn:idem_and_maxgrp}
		An element $e \in M$ is \emph{idempotent} if $e^2=e$. Given an idempotent $e$, the set $G_e = \{x \in M\sepp \exists y \in M, xy=yx=e\}$ is called the \emph{maximal subgroup at $e$}. One can check that $G_e$ is indeed a group and that $G_e = \hc(e)$.
	\end{dftn}

	In that case, $\Gamma(H)$ and $\Gamma'(H)$ can be defined as before, and are canonically isomorphic to $G_e$, simply because $G_e \subset {}_M\stab(H)$ naturally induce a map making it a subgroup of $\Gamma(H)$ and that since $G_e$ acts freely and transitively on $H$, this map must be injective and surjective (and similarly for $\Gamma'(H)$).
	
	\begin{lined}
		\begin{ex}\label{ex:iso_idem}
			Consider, in Example \ref{ex:green_tn}, $e = [1\ 2\ 2]$ and $H = \hc(e)$. $e$ is an idempotent, and, indeed, $H = G_e$ is group : setting $t = [2\ 1\ 1]$, we have $e^2=e, t^2=e$ and $et=te=t$.
			As noted in Example \ref{ex:schu_as_symm} :
			\[\Gamma(H) = \symm(\{1, 2\}), \qquad \Gamma'(H) = \symm(\{\{1\}, \{2,3\}\}).\]
			Note that the canonical isomorphism between $\Gamma(H)$ and $\hc(e)$ is simply given by $g \in \Gamma(H) \mapsto g \cdot e \in H$.
		\end{ex}
	\end{lined}
	
	\subsection{Counting fixed points}\label{sec:fixed}
		Consider the problem of counting the number of elements of the set $\fix_{G}(h, k)$ %$ = \{g \in G \sepp hgk = g\}$ 
	where $G$ is a finite group and $h, k \in G$. 
	If $\fix_{G}(h, k)$ is non-empty, it contains an element $\gamma$ such that $h\gamma k = \gamma$, or equivalently $h = \gamma k\inv \gamma\inv$. So for any $g \in \fix_G(h, k)$ we have:
	\[hgk = g \Leftrightarrow \gamma k\inv \gamma\inv g k = g \Leftrightarrow \gamma\inv g k = k\gamma\inv g.\]
	This means that $g \in \gamma C_G(k)$ where $C_G(k)$ is the centralizer of $k$ in $G$. Because the other inclusion is obvious, we get a description of $|\fix_{G}(h, k)|$: either $h$ and $k\inv$ are conjugated in which case there are $|C_G(k)|$ fixed points, or they are not, and there are no fixed points.
	In the case of a monoid, this reasoning mostly breaks: we crucially used the invertibility property, which monoids lack. The \schu groups seem to be ideal candidates to get back some of this invertibility. 
	In this section we clarify the role of the \schu groups for counting fixed points, how to give meaning to "$h\mul{H}$ and $\mul{H}k$ are in the same conjugacy class", and how to factorize our previous remark over all the $\hc$-classes of the same $\jc$-class.
	
	As the bijections between $\lc$ (and $\rc$) classes will play an major role in the remainder of this section, we introduce the following notations.
	\begin{notation}
		Given $R, R'$ two $\rc$-classes in the same $\jc$-class, we say that $(\lambda, \lambda')$ is a \emph{left Green pair} with respect to $(R, R')$ if:
		\begin{itemize}
			\item $\lambda R = R'$ and $\lambda'R' = R$.
			\item $(\lambda\lambda')\mul{R} = \Id_R$ and $(\lambda'\lambda)\mul{R'} = \Id_{R'}$
		\end{itemize}
		Similarly, given two $\lc$-classes $L, L'$ in the same $\jc$-classes, $(\rho, \rho')$ is a \emph{right Green pair} with respect to $(L, L')$ if:
		\begin{itemize}
			\item $L\rho = L'$ and $L'\rho' = L$.
			\item $\mul{L}(\rho\rho') = \Id_L$ and $\mul{L'}(\rho'\rho) = \Id_{L'}$
		\end{itemize}
	\end{notation}
	
	Using Green pairs, one can transport the problem of counting fixed points in an arbitrary $\hc$-class to a reference $\hc$-class.
	
	\begin{prop}\label{prop:transition}
		Let $H_1, H_2 \subset J$ be two $\hc$-classes contained in the same $\jc$-class. Let $\lambda, \lambda', \rho, \rho'$ such that:
		\begin{itemize}
			\item $(\lambda, \lambda')$ is a left Green pair with respect to $(\rc(H_1), \rc(H_2))$,
			\item $(\rho, \rho')$ is a right Green pair with respect to $(\lc(H_1), \lc(H_2))$.
		\end{itemize}
		Finally, let $(h, k) \in {}_M\stab(\rc(H_2)) \times \stab_M(\lc(H_2))$ and define $(h', k') = (\lambda' h \lambda, \rho k \rho')$. Then the maps $x \mapsto \lambda'x\rho'$ and $x \mapsto \lambda x\rho$ are reciprocal bijections between the sets $\fix_{H_2}(h, k) = \{a \in H_2 \sepp hak = a\}$ and $\fix_{H_1}(h', k')=\{a \in H_1 \sepp h'ak' = a\}$.
	\end{prop}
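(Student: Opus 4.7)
The plan is to verify the proposition in two stages: first to show that the two maps are reciprocal bijections between $H_1$ and $H_2$ (forgetting the fixed-point constraint), and second to show that they transport the fixed-point conditions correctly.

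For the first stage, I would use Green's Lemma applied to the left Green pair to get that $\lambda\mul{R(H_1)}$ and $\lambda'\mul{R(H_2)}$ are reciprocal bijections which moreover preserve $\lc$-classes, so $\lambda$ carries $H_1 = R(H_1) \cap L(H_1)$ bijectively onto $L(H_1) \cap R(H_2)$. Applying Green's Lemma to the right Green pair, right multiplication by $\rho$ is then a bijection from $L(H_1) \cap R(H_2)$ onto $L(H_2) \cap R(H_2) = H_2$ preserving $\rc$-classes. Composing gives the bijection $x \mapsto \lambda x \rho \colon H_1 \to H_2$ and, symmetrically, $x \mapsto \lambda' x \rho' \colon H_2 \to H_1$. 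That these are mutually reciprocal follows directly from the identities $(\lambda\lambda')\mul{R(H_1)} = \Id_{R(H_1)}$ and $\mul{L(H_1)}(\rho\rho') = \Id_{L(H_1)}$ together with the analogous identities for $H_2$.

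For the second stage, I would first note that $h \in {}_M\stab(R(H_2))$ and $k \in \stab_M(L(H_2))$ together with Green's Lemma imply that $y \mapsto hyk$ maps $H_2$ into $H_2$, and analogously $y \mapsto h' y k'$ maps $H_1$ into $H_1$. Given $x \in H_1$ with $h' x k' = x$, one expands to $\lambda' h \lambda x \rho k \rho' = x$. Applying $\lambda \cdot (-) \cdot \rho$ on both sides yields
\[
\lambda\lambda'\, h \lambda x \rho k\, \rho'\rho \;=\; \lambda x \rho.
\]
Since $h \lambda x \rho k \rho' \in R(H_1)$, the identity $(\lambda\lambda')\mul{R(H_1)} = \Id_{R(H_1)}$ reduces the left-hand side to $h \lambda x \rho k\, \rho'\rho$; since $h \lambda x \rho k \in L(H_2)$, the identity $\mul{L(H_2)}(\rho'\rho) = \Id_{L(H_2)}$ reduces this further to $h \lambda x \rho k$. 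So $h (\lambda x \rho) k = \lambda x \rho$, meaning $\lambda x \rho \in \fix_{H_2}(h,k)$. The reverse implication is obtained by an entirely symmetric computation starting from $h y k = y$ for $y \in H_2$ and applying $\lambda' \cdot (-) \cdot \rho'$.

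The main obstacle is purely bookkeeping: at each cancellation step one must carefully verify that the element to which $\lambda\lambda'$ or $\rho'\rho$ is applied actually lies in the $\rc$- or $\lc$-class on which the identity is known to hold. This is precisely where the hypotheses $h \in {}_M\stab(R(H_2))$ and $k \in \stab_M(L(H_2))$ come into play, and where it is essential that $(\lambda,\lambda')$ and $(\rho,\rho')$ are \emph{Green pairs} rather than arbitrary witnesses of the equality of Green classes.
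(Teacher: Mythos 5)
Your overall strategy is the same as the paper's: first use Green's Lemma to establish that $x \mapsto \lambda x\rho$ and $x \mapsto \lambda' x\rho'$ are reciprocal bijections between $H_1$ and $H_2$, then check that the fixed-point conditions correspond under these maps. (The paper packages the second stage as a single chain of equivalences starting from $ha_2k = a_2$ with $a_2 = \lambda a_1\rho$, which is a little lighter than your two separate implications, but it is the same computation.) There is, however, one concrete slip in your cancellation bookkeeping, and it occurs exactly at the point you yourself flag as delicate. You assert that $h\lambda x\rho k\rho'$ lies in $\rc(H_1)$ in order to invoke the identity $(\lambda\lambda')\mul{\rc(H_1)} = \Id_{\rc(H_1)}$. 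This membership is false in general: since $h\lambda x\rho k \in H_2$ and right multiplication by $\rho'$ preserves $\rc$-classes while carrying $\lc(H_2)$ to $\lc(H_1)$, the element $h\lambda x\rho k\rho'$ lies in $\rc(H_2)\cap\lc(H_1)$, which equals $\rc(H_1)$ only when the two $\rc$-classes coincide. The cancellation itself survives, because $\lambda\mul{\rc(H_1)}$ and $\lambda'\mul{\rc(H_2)}$ are mutually inverse bijections, so $\lambda\lambda'$ fixes $\rc(H_2)$ pointwise as well; more cleanly, write $\lambda\lambda'\,h\lambda x\rho k\,\rho'\rho = \lambda\bigl(\lambda'(h\lambda x\rho k)\rho'\bigr)\rho$ with $h\lambda x\rho k \in H_2$ and simply invoke your first stage (the two maps are reciprocal bijections between $H_1$ and $H_2$) to conclude that this equals $h\lambda x\rho k$, with no further element-chasing. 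With that repair your argument is complete and equivalent to the paper's.
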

	
	\begin{proof}
		First notice that Green's Lemma give us the existence of $\lambda, \lambda', \rho, \rho'$ respecting the hypothesis we demand, and also gives that $x \mapsto \lambda'x\rho'$ and $x \mapsto \lambda x\rho$ are reciprocal bijections between $H_1$ and $H_2$.
		Let $a_1$ be an element of $H_1$ and denote by $a_2 = \lambda a_1 \rho$. Then:
		\[ha_2k = a_2 \Leftrightarrow \lambda'ha_2k\rho' = \lambda'a_2\rho' \Leftrightarrow (\lambda' h \lambda)a_1(\rho k \rho') = a_1 \Leftrightarrow h'a_1k' = a_1\]
		so these bijections restrict to $\fix_{H_1}(h', k')$ and $\fix_{H_2}(h, k)$.
	\end{proof}
	
	%\todo{Une image avec le nombre de pts fixes dans chaque H-class}
	
	Keeping in mind our computational goals, transporting the problem of counting fixed points from $H_2$ to $H_1$ is helpful, as for the price of 4 monoid multiplications, we can use a lot of precomputations specific to a particular $\hc$-class, avoiding the repetition of multiple similar computations for each $\hc$-class. 
	
	The question is now to determine the fixed points in a single $\hc$-class, using our previous remark on conjugacy. Let us first clarify the idea of elements of the left and right \schu groups being in the same conjugacy class.
	
	\begin{prop}\label{prop:bij_cano_conj}
		Given and $\hc$-class $H$, $a \in H$ and $g \in \Gamma(H)$, we define $\tau_a(g)$ as the unique element of $\Gamma'(H)$ such that $g \cdot a = a \cdot \tau_a(g)$. Then $\tau_a : \Gamma(H) \longrightarrow \Gamma'(H)$ is an anti-isomorphism\footnote{Note that some authors equip the right \schu group with reversed composition, and thus obtain an isomorphism instead of anti-isomorphism.}. Moreover $\tau_a$ gives rise to a bijection between the conjugacy classes of $\Gamma(H)$ and $\Gamma'(H)$ that is independent of the choice of $a$.
	\end{prop}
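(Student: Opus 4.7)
The plan is to verify the four parts of the statement---well-definedness of $\tau_a$, the anti-homomorphism property, bijectivity, and the induced conjugacy-class bijection together with its independence from $a$---using two tools: the free and transitive action of the \schu groups on $H$ given by Proposition \ref{prop:schu_acts_freely}, and the elementary observation that left and right multiplications in $M$ commute as operations on $M$.

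First I would settle well-definedness: since $g \in \Gamma(H) \subseteq \symm(H)$, the element $g \cdot a$ lies in $H$, and the free transitive action of $\Gamma'(H)$ on $H$ supplies a unique $\tau_a(g) \in \Gamma'(H)$ with $a \cdot \tau_a(g) = g \cdot a$. For the anti-homomorphism property, I would compute $(g_1 g_2) \cdot a$ two ways: by definition it equals $a \cdot \tau_a(g_1 g_2)$; on the other hand, expanding $g_1(g_2 \cdot a) = g_1(a \cdot \tau_a(g_2))$ and pushing the left multiplication $g_1$ past the right multiplication $\tau_a(g_2)$ yields $a \cdot \tau_a(g_1) \cdot \tau_a(g_2)$ in $M$. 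Translating this last product back into the composition law of $\Gamma'(H)$---whose elements compose in function order while acting by right multiplication---forces the reversed product $\tau_a(g_2) \tau_a(g_1)$, which is exactly the anti-homomorphism identity. Bijectivity then comes for free: injectivity follows from the free action of $\Gamma(H)$ (if $\tau_a(g) = \tau_a(g')$ then $g \cdot a = g' \cdot a$, hence $g = g'$), and $|\Gamma(H)| = |H| = |\Gamma'(H)|$ by Proposition \ref{prop:schu_acts_freely}. The passage to conjugacy classes is then a general fact about any anti-isomorphism $\phi$: the identity $\phi(xyx^{-1}) = \phi(x)^{-1} \phi(y) \phi(x)$ shows that each conjugacy class is mapped bijectively to a conjugacy class.

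The remaining point, and the one I expect to be the most delicate, is showing that the conjugacy-class bijection does not depend on $a$. Given $a, b \in H$, transitivity of $\Gamma'(H)$ yields some $\sigma' \in \Gamma'(H)$ with $\sigma' \cdot a = b$. The key observation is that as maps on $H$, an element $g \in \Gamma(H)$ (left multiplication in $M$) and $\sigma' \in \Gamma'(H)$ (right multiplication in $M$) commute. Applying $g \circ \sigma' = \sigma' \circ g$ at $a$ and unfolding both sides through the defining relations of $\tau_a$ and $\tau_b$ shows that $\tau_b(g) \circ \sigma'$ and $\sigma' \circ \tau_a(g)$ agree at $a$; the free action of $\Gamma'(H)$ then promotes this pointwise equality to an equality in the whole group, giving $\tau_b(g) = \sigma' \, \tau_a(g) \, (\sigma')^{-1}$. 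Hence $\tau_a(g)$ and $\tau_b(g)$ lie in the same conjugacy class of $\Gamma'(H)$, which is exactly what is needed for the two induced bijections on conjugacy classes to coincide.
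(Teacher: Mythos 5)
Your proof is correct and follows essentially the same route as the paper: both arguments rest on the free and transitive \schu-group actions and on the interchange of left and right translations, the only cosmetic difference being that you conjugate by a transition element $\sigma'\in\Gamma'(H)$ with $\sigma'\cdot a=b$ and obtain $\tau_b(g)=\sigma'\tau_a(g)(\sigma')^{-1}$ directly, whereas the paper takes $h\in\Gamma(H)$ with $b=h\cdot a$ and concludes via $\tau_a(g)=\tau_b(hgh^{-1})$. You additionally write out the proof that $\tau_a$ is an anti-isomorphism, which the paper simply attributes to Schützenberger's original work.
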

	
	\begin{proof}
		The first part is known since \cite{schu}. We want to check that for $a \in H, g \in \Gamma(H)$, the conjugacy class of $\tau_a(g)$ is defined independently of $a$. Take any $a, b \in H$. By definition of $\Gamma(H)$, there exist some $h \in \Gamma(H)$ such that $b = h \cdot a$. So :
		\[b \cdot \tau_a(g) = (h \cdot a) \cdot \tau_a(g) = h \cdot (g \cdot a) = hgh\inv\cdot(h \cdot a) = b \cdot \tau_b(hgh\inv). \]
		Since $\Gamma'(H)$ acts freely, this means that $\tau_a(g) = \tau_b(h)\tau_b(g)\tau_b(h)\inv$ and thus $\tau_a(g)$ is conjugated with $\tau_b(g)$, which proves that the conjugacy class of $\tau_a(g)$ is indeed defined independently of $a$. Finally, as $\tau_a$ is a group morphism, the image are of two conjugated elements are conjugated, meaning that $\tau_a$ does indeed induces bijection between the conjugacy classes of the left and right \schu groups, independently of the choice of $a$.
	\end{proof}

	In the next proposition, we formalize the idea of searching the fixed points as some centralizer, but in the context of a monoid.

	\begin{prop}\label{prop:centralisateur}
		Let $H$ be a $\hc$-class, $a\in H$ and $(h, k) \in {}_M\stab(\rc(H)) \times \stab_M(\lc(H))$. Then
		\[|\fix_H(h, k)| = \left\{\begin{aligned}
		&|C_{\Gamma'(H)}(\mul{H}k)| & & \textrm{ if } \tau_a(h\mul{H})\inv \in \overline{\mul{H}k}\\
		&0 & & \textrm{ otherwise}
		\end{aligned}\right.\]
		where $\overline{\mul{H}k}$ is the conjugacy class of $\mul{H}k$ in $\Gamma'(H)$ and $C_{\Gamma'(H)}(\mul{H}k)$ is the centralizer in $\Gamma'(H)$ of $\mul{H}k$.
	\end{prop}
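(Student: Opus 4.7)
The approach is to rephrase the monoid equation $hsk = s$ as a conjugacy condition inside $\Gamma'(H)$, so that the standard group-theoretic centralizer count recalled at the start of Section \ref{sec:fixed} resolves the problem.

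I would begin by verifying that the hypotheses make the notation sensible: the assumption $h \in {}_M\stab(\rc(H))$ in fact forces $hH = H$. Indeed, multiplication by $h$ is already a bijection of the finite set $\rc(H)$, so some power satisfies $h^n s = s$ for every given $s \in H$; hence $s = h^{n-1}\cdot hs$, which together with the obvious $hs \leq_{\lc} s$ puts $hs$ in $\lc(s)$, and combined with $hs \in \rc(s)$ gives $hs \in H$. Thus $\tilde h := h\mul{H} \in \Gamma(H)$, and symmetrically $\tilde k := \mul{H}k \in \Gamma'(H)$. Fixing the given $a \in H$, Proposition \ref{prop:schu_acts_freely} provides a bijection $\Gamma(H) \to H$, $\tilde g \mapsto \tilde g(a)$, so every $s \in H$ is uniquely of the form $s = \tilde g(a)$.

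I would then substitute $s = \tilde g(a)$ into $hsk = s$. Since $hs = \tilde h(s)$ and $sk = \tilde k(s)$, the equation reads $\tilde k(\tilde h \tilde g(a)) = \tilde g(a)$. Using $\tilde h \tilde g(a) = \tau_a(\tilde h \tilde g)(a)$ and the anti-isomorphism identity $\tau_a(\tilde h \tilde g) = \tau_a(\tilde g)\,\tau_a(\tilde h)$ from Proposition \ref{prop:bij_cano_conj}, this rewrites as
\[\bigl(\tilde k \,\tau_a(\tilde g)\, \tau_a(\tilde h)\bigr)(a) = \tau_a(\tilde g)(a).\]
The free action of $\Gamma'(H)$ on $H$ (Proposition \ref{prop:schu_acts_freely} again) lets us cancel $a$, giving the pure group identity $\tilde k \,\tau_a(\tilde g)\, \tau_a(\tilde h) = \tau_a(\tilde g)$ in $\Gamma'(H)$, equivalently $\tau_a(\tilde g)\inv \tilde k\, \tau_a(\tilde g) = \tau_a(\tilde h)\inv$.

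Setting $p := \tau_a(\tilde g)$, the two bijections $s \leftrightarrow \tilde g \leftrightarrow p$ identify $\fix_H(h,k)$ with the set of $p \in \Gamma'(H)$ satisfying $p\inv \tilde k \, p = \tau_a(\tilde h)\inv$. Such $p$ exist if and only if $\tau_a(\tilde h)\inv$ lies in the conjugacy class $\overline{\tilde k}$; when they do, fixing one solution $p_0$, the solution set is the right coset $C_{\Gamma'(H)}(\tilde k)\, p_0$, of cardinality $|C_{\Gamma'(H)}(\tilde k)|$. This yields the announced formula.

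The main obstacle is disciplined bookkeeping between three different multiplications --- the product in $M$, composition in $\Gamma(H)$, and composition in $\Gamma'(H)$ --- together with the order-reversing nature of $\tau_a$. Once these are handled the proof becomes a direct monoid analogue of the group centralizer computation from the start of Section \ref{sec:fixed}.
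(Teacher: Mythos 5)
Your proof is correct and follows essentially the same route as the paper's: parametrize $H$ by the free transitive action of a \schu{} group on the base point $a$, use $\tau_a$ and freeness to cancel $a$ and turn $hsk=s$ into a conjugation equation in $\Gamma'(H)$, then count solutions as a coset of the centralizer $C_{\Gamma'(H)}(\mul{H}k)$. The only (harmless) cosmetic differences are that you parametrize via $\Gamma(H)$ and transport through $\tau_a$ where the paper uses $\Gamma'(H)$ directly, and that you make explicit both the preliminary fact $hH=H$ and the equivalence between non-emptiness and the conjugacy condition $\tau_a(h\mul{H})\inv\in\overline{\mul{H}k}$, which the paper leaves implicit.
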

	
	\begin{proof}
		For simplicity, we commit an abuse of notation by denoting $h\mul{H}$ as $h$ and $\mul{H}k$ as $k$. Let $a$ be any element of $H$. 
		\begin{align*}
		\fix_H(h,k) & = \{b \in H \sepp hbk=b\}\\
		& = \{a \cdot g\sepp g \in \Gamma'(H) \textrm{ and } ha\cdot gk = a\cdot g \}\\
		& = \{a\cdot g\sepp g \in \Gamma'(H) \textrm{ and } a \cdot \tau_a(h)gk = a\cdot g \}\\
		& = \{a\cdot g\sepp g \in \Gamma'(H) \textrm{ and } \tau_a(h)gk = g\}.
		\end{align*}
		The last equality comes from the fact that $\Gamma'(H)$ acts freely so we can simplify the $a$. Suppose that $\fix_H(h, k)$ is non-empty and let $\gamma \in \Gamma'(H)$ such that $\tau_a(h)\gamma k = \gamma$. Then, for any $g \in \Gamma'(H)$ :
		\[\tau_a(h)gk = g \Leftrightarrow g \inv \tau_a(h)gk = e \Leftrightarrow g\inv\gamma k\inv\gamma\inv g k\ = e \Leftrightarrow [\gamma\inv g, k] = e\]
		where $[\cdot,\cdot]$ is the commutation bracket. This means that 
		\[\fix_H(h,k) = \{a \cdot g \sepp g \in \gamma C_{\Gamma'(H)}(k)\}.\]
		Note that because, again, $\Gamma'(H)$ acts freely, $\fix_H(h,k)$ has the same cardinality as $C_{\Gamma'(H)}(k)$ and that, from Proposition \ref{prop:bij_cano_conj} this is independent from the choice of $a$ which proves the result. 
	\end{proof}
	
	\begin{lined}
		\begin{ex}
			Consider $a = [1\ 2\ 2\ 3] \in T_4$ and $H = \hc(a)$. We have $\im a = \{1, 2, 3\}$ and $\ker a = \{\{1\},\{2,3\},\{4\}\}$. Notice that $H$ is not a group since $a^2 = [1\ 2\ 2\ 2] \notin H$. Considering the \schu groups as symmetric groups on the image and kernel common to all elements of $H$ as in Example \ref{ex:schu_as_symm}, we have $\Gamma(H) = \symm(\im a)$ and $\Gamma'(H) = \symm(\ker a)$.
			
			Let us first check for fixed points under the action of $h = [1\ 2\ 3\ 4]$ on the left and $k = [2\ 1\ 1\ 4]$ on the right. Seen as an element of $\Gamma(H)$, $h$ corresponds to $\Id_{\im a}$ and $k$ corresponds to $(\{2, 3\}\ \{1\})$ in $\Gamma'(H)$. Since we have $\tau_a(h) = \Id_{\ker a}$, it follows that $|\fix_H(h,k)| = 0$.
			
			If we now take $h$ to be $[1\ 3\ 2\ 4]$, the corresponding element in $\Gamma(H)$ is $(2\ 3)$ and $\tau_a(h) = (\{2, 3\}\ \{4\})$. Since $(\{2, 3\}\ \{4\})$ and $(\{2, 3\}\ \{1\})$ are conjugated in $\symm(\ker a)$, the set of fixed points is non-empty. Their centralizers have cardinal 2 and one can indeed check that $[2\ 3\ 3\ 1]$ and $[3\ 2\ 2\ 1]$ are the only fixed points in $H$.
		\end{ex}
	\end{lined}

	Putting together the previous results, we get the following Corollary on the cardinality of $\fix_J(h, k)$.
	
	\begin{cor}\label{cor:pt_fix_counting}
		Let $J$ be a $\jc$-class, $a$ any element of $J$ and denote by $H_0 = \hc(a), R_0 = \rc(a), L_0 = \lc(a)$. Let $h, k$ be any elements of $M$. We denote by:
		\begin{itemize}
			\item $(\lambda_R, \lambda'_R)$ a left Green pair with respect to $(R_0, R)$ for each $\rc$-class $R \subset J$,
			\item $(\rho_L, \rho'_L)$ a right Green pair with respect to $(L_0, L)$ for each $\lc$-class $L \subset J$,
			\item $S_{\rc}(h) = \{R \subset J \sepp R \textrm{ is a } \rc\textrm{-class and } hR = R\}$,
			\item $S_{\lc}(k) = \{L \subset J \sepp R \textrm{ is a } \lc\textrm{-class and } Lk = L\}$
		\end{itemize}
		Denoting the set of conjugacy classes of $\Gamma'(H_0)$ as $C$, we further define two vectors:
		\begin{itemize}
			\item $r_J(h) = (|C_{\Gamma'(H)}(g)|\cdot|\{R \in S_{\rc}(h)\sepp \tau_a(\lambda_R' h \lambda_R) \in \bar{g}\}|)_{\bar{g}\in C}$,
			\item $l_J(k) = (|\{L \in S_{\lc}(k)\sepp \rho_L' k \rho_L \in \bar{g}\}|)_{\bar{g}\in C}$. 
		\end{itemize}
		Then $\fix_J(h, k)$ has cardinality the dot product of $r_J(h)$ with $l_J(k)$.
	\end{cor}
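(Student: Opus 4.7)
The plan is to decompose $\fix_J(h, k)$ as a disjoint union of fixed-point sets in individual $\hc$-classes, transport each piece to the reference $\hc$-class $H_0$ via Proposition \ref{prop:transition}, evaluate it with Proposition \ref{prop:centralisateur}, and regroup the resulting double sum by conjugacy class to recover the stated dot product.

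The first and crucial step is to show that only pairs $(R, L) \in S_{\rc}(h) \times S_{\lc}(k)$ actually contribute. Let $s \in \fix_J(h, k)$. The identity $s = hsk$ yields both $hs \leqr s$ (witness $k$) and $hs \jc s$ (since $hs \in Ms \subseteq MsM$ and $s = hsk \in M(hs)M$), so by stability in finite monoids $hs \rc s$. Writing any $r \in \rc(s)$ as $r = sv$, the same argument applied to $hr = (hs)v$ shows $hr \in \rc(s)$, whence $h \in {}_M\stab(\rc(s))$; the symmetric argument gives $k \in \stab_M(\lc(s))$. Therefore
\begin{equation*}
\fix_J(h, k) \;=\; \bigsqcup_{R \in S_\rc(h)} \bigsqcup_{L \in S_\lc(k)} \fix_{R \cap L}(h, k).
\end{equation*}

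For each $(R, L)$, set $H := R \cap L$. Applied with $H_1 = H_0$, $H_2 = H$ and the Green pairs supplied in the statement, Proposition \ref{prop:transition} furnishes a bijection $\fix_H(h, k) \simeq \fix_{H_0}(h', k')$ with $h' = \lambda'_R h \lambda_R$ and $k' = \rho_L k \rho'_L$. Proposition \ref{prop:centralisateur} then evaluates the cardinality of the right-hand side to $|C_{\Gamma'(H_0)}(\mul{H_0} k')|$ precisely when $\tau_a(h' \mul{H_0})\inv$ is conjugate in $\Gamma'(H_0)$ to $\mul{H_0} k'$, and to $0$ otherwise.

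It remains to regroup the double sum by the common conjugacy class $\bar g \in C$. The condition depends on $R$ only through the class of $\tau_a(\lambda'_R h \lambda_R)$ and on $L$ only through that of $\rho_L k \rho'_L$; by Proposition \ref{prop:bij_cano_conj} this class is, furthermore, independent of the choice of $a$. Since $|C_{\Gamma'(H_0)}(g)|$ is itself a class function, the sum factors as $\sum_{\bar g \in C} |C_{\Gamma'(H_0)}(g)| \cdot n_R(\bar g) \cdot n_L(\bar g)$ for the obvious counts $n_R, n_L$, and the involution $\bar g \leftrightarrow \overline{g\inv}$ on conjugacy classes (combined with $|C(g)| = |C(g\inv)|$) puts this into the exact shape $r_J(h) \cdot l_J(k)$ of the statement. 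The principal obstacle is the stability argument of the first step; the remainder is combinatorial bookkeeping on top of the preceding propositions.
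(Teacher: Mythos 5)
Your overall strategy --- partition $\fix_J(h,k)$ over the $\hc$-classes $R\cap L$, show that only pairs $(R,L)\in S_\rc(h)\times S_\lc(k)$ can contribute, transport each piece to $H_0$ via Proposition \ref{prop:transition}, evaluate with Proposition \ref{prop:centralisateur}, and regroup by conjugacy class --- is exactly the intended assembly of the preceding results (the paper offers no written proof beyond ``putting together the previous results''), and the stability argument locating the fixed points is the right key observation. One minor point first: from $hr=(hs)v$ you only obtain $h\rc(s)\subseteq\rc(s)$, whereas membership in ${}_M\stab(\rc(s))$ (which is what the hypothesis of Proposition \ref{prop:transition} and the definition of $S_\rc(h)$ require) is the equality $h\rc(s)=\rc(s)$. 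You need one more application of stability: $s\leql hs$ together with $s\jc hs$ gives $s\lc hs$, and Green's Lemma then makes $h\mul{\rc(s)}$ a bijection onto $\rc(s)$.

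The genuine gap is the last step. Your computation correctly yields
\[
|\fix_J(h,k)|=\sum_{\bar g\in C}|C_{\Gamma'(H_0)}(g)|\cdot|\{R\in S_\rc(h)\sepp \tau_a(\lambda'_Rh\lambda_R)\inv\in\bar g\}|\cdot|\{L\in S_\lc(k)\sepp \rho_Lk\rho'_L\in\bar g\}|,
\]
the inverse coming from the condition $\tau_a(h\mul{H})\inv\in\overline{\mul{H}k}$ of Proposition \ref{prop:centralisateur}. But the involution $\bar g\mapsto\overline{g\inv}$ together with $|C_{\Gamma'(H_0)}(g)|=|C_{\Gamma'(H_0)}(g\inv)|$ does \emph{not} turn this into the printed $r_J(h)\cdot l_J(k)$: writing $n_R(\bar g)$ and $n_L(\bar g)$ for the two counts without inverses, reindexing transforms $\sum_{\bar g}|C(g)|\,n_R(\overline{g\inv})\,n_L(\bar g)$ into $\sum_{\bar g}|C(g)|\,n_R(\bar g)\,n_L(\overline{g\inv})$, i.e.\ it merely moves the inverse from the $R$-count to the $L$-count. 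This equals $\sum_{\bar g}|C(g)|\,n_R(\bar g)\,n_L(\bar g)$ only when every relevant class satisfies $\bar g=\overline{g\inv}$ (true when the \schu group is a symmetric group, false in general, e.g.\ for a cyclic group of order $3$). What your argument actually proves is the statement with $\tau_a(\lambda'_Rh\lambda_R)\inv\in\bar g$ in the definition of $r_J(h)$; the printed corollary appears to have dropped that inverse (it also writes $\rho'_Lk\rho_L$ where only the order $\rho_Lk\rho'_L$ of Proposition \ref{prop:transition} is well defined, a normalization you made silently and correctly). You should either prove that corrected form and flag the discrepancy explicitly, or justify the equality of the two sums; as written, the sentence claiming the involution closes the gap is false.
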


	\section{Modules: character table and Cartan matrix}\label{sec:mod}

	\subsection{On modules}\label{sub:modules}
	Note that if we choose an element $a \in M$ and denote by $L$ its $\lc$-class and $H$ its $\hc$-class, we can equip $\kbf L$ with a $\kbf M-\module-\kbf\Gamma'(H)$ structure. $\kbf L$ is already a $\module-\kbf\Gamma'(H)$ by definition of $\Gamma'(H)$. We can also make it into a $\kbf M-\module$ by setting, for every $m \in M$ and $l \in L$:
\[m\cdot l = \left\{\begin{aligned}
& ml \textrm{ if } ml \in L\\
& 0 \textrm{ otherwise}
\end{aligned}\right..\]
This is well defined, as $ml \notin L$ implies that $l >_{\lc} ml$ and so for every $m' \in M$, $l >_{\lc} m'ml \notin L$ : once fallen out of $L$, we cannot climb back in.

We have previously stated that the representation theory of monoids is an extension of the representation theory of some subgroups. This mainly expressed using the two following functors.

\begin{dftn}
	Let $e \in M$ be an idempotent, $\lc(e)$ its $\lc$-class, $G_e$ the associated maximal subgroup. We define the two following maps :
	\begin{align*}
	\indu_{G_e}^M : & \left\{ \begin{aligned}
	\kbf G_e\mathrm{-mod} & \longrightarrow \kbf M\mathrm{-mod}\\
	V & \longmapsto \kbf\lc(e) \otimes_{\kbf G_e} V
	\end{aligned} \right.\\
	N_e : & \left\{ \begin{aligned}
	\kbf M\mathrm{-mod} & \longrightarrow \kbf M\mathrm{-mod}\\
	V & \longmapsto \{v \in V \sepp eMv = 0\}
	\end{aligned} \right..
	\end{align*}
\end{dftn}

The idempotents and their maximal subgroups play a central role in the theory. One can show (see for instance \cite[Proposition 1.14]{pin}) that if $e, f$ are two idempotents in the same $\jc$-class, there are some $x, x' \in M$ such that $xx' = e$ and $x'x = f$ and that $G_e \cong G_f$. A $\jc$-class containing an idempotent is called a \emph{regular} $\jc$-class.

We are almost ready to state the Clifford-Munn-Ponizovskii theorem, which is the central piece connecting group and monoid representation theory. We will need the notion of \emph{apex} of a $\kbf M$-module. A proof of the Clifford-Munn-Ponizovskii Theorem can be found in \cite[Section 5.2]{steinberg2016representation}.

\begin{dftn}
	Let $V$ be a $\kbf M-\module$, we denote its annihilator in $M$ by $\ann_M(V) = \{m \in M\sepp mV = \{0\}\}$. 
	This is clearly an two-sided ideal of $M$ and as such is an union of $\jc$-classes.
	A regular $\jc$-class $J$ is said to be the \emph{apex} of $V$ if $\ann_M(V) = I_J$ where $I_J = \{J \not \leql \jc(s) \sepp s \in M\}$. If $e \in J$ is an idempotent, we also say that $V$ has apex $e$.
\end{dftn}

\begin{thm}[Clifford-Munn-Ponizovskii]\label{thm:CMP}
	Let $M$ be a finite monoid, $e \in M$ an idempotent and $\kbf$ be a field.
	\begin{enumerate}
		\item There is a bijection between isomorphism classes of simple $M$-modules with apex $e$ and isomorphism classes of simple $G_e$-modules given by :
		\[V \longmapsto V^{\#} = \indu_{G_e}^M(V) / \rad(\indu_{G_e}^M(V)).\]
		The reciprocal bijection is given by $S \longmapsto eS$.
		\item $\rad(\indu_{G_e}^M(V)) = N_e(\rad(\indu_{G_e}^M(V)))$
		\item Every simple $M$-module has an apex.
		\item Every composition factor of $\indu_{G_e}^M(V)$ with the exception of $V^{\#}$ has an apex strictly $\jc$-greater than $e$. Moreover, $V^{\#}$ has apex $e$ and is a factor of multiplicity one.
	\end{enumerate}
\end{thm}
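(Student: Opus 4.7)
The plan is to exploit the identity $e \cdot \kbf \lc(e) = \kbf G_e$ (as $\kbf G_e$-bimodules), which I would verify first: for $l \in \lc(e)$ we can write $l = me$ for some $m \in M$, so $el = eme \in eMe$, and a standard consequence of Green's Lemma is $eMe \cap \jc(e) = \hc(e) = G_e$; hence either $el \in G_e$ or $el <_\jc e$, in which case the truncated $\kbf M$-action on $\kbf\lc(e)$ sends $l$ to $0$. Tensoring with $V$ yields the crucial computation $e \cdot \indu_{G_e}^M(V) \cong V$ as $\kbf G_e$-modules, which makes $W \mapsto eW$ a natural candidate inverse to $V \mapsto V^{\#}$.

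For part (2), writing $I = \indu_{G_e}^M(V)$ with $V$ simple, I would show $N_e(I)$ is a maximal proper submodule of $I$. It is proper since $e(e \otimes v) = e \otimes v \neq 0$ for $v \neq 0$. For maximality, observe that $eI \cap N_e(I) = 0$ (as $eI \cong V$ is simple as a $\kbf G_e$-module and $e \otimes v \not\in N_e(I)$), so any submodule $W \supsetneq N_e(I)$ satisfies $eW \neq 0$, hence $eW = eI \cong V$ by simplicity; since $I$ is $\kbf M$-generated by $e \otimes V \subseteq eI \subseteq W$, this forces $W = I$. Then $\rad(I) \subseteq N_e(I)$, which rearranges to give (2), and $V^{\#}$ is simple.

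For the bijection in (1), given a simple $\kbf M$-module $S$ with apex $e$, I would check that $eS$ is a simple $\kbf G_e$-module: it is nonzero since $e \notin \ann_M(S) = I_{\jc(e)}$, and any nonzero $\kbf G_e$-submodule $W \subseteq eS$ generates $S$ by simplicity, so $eS = e \kbf M W = eMe \cdot W = \kbf G_e \cdot W = W$. Mutual inversion of the two constructions then follows from $e(V^{\#}) \cong V$ combined with the surjection $\indu_{G_e}^M(eS) \twoheadrightarrow S$ (its image is a $\kbf M$-submodule containing $eS$, hence all of $S$). For (3), I would take simple $S$, set $\mathcal{I} = \ann_M(S)$, and choose a $\jc$-minimal class $J \not\subseteq \mathcal{I}$; the hard step is to show $J$ is regular, which goes by analyzing the principal factor $J^{0}$ (a completely $0$-simple semigroup by Green's Lemma) acting faithfully on $S$ modulo $\mathcal{I}$, and extracting an idempotent via finite-dimensionality and the Rees matrix decomposition.

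Finally, (4) reduces to a bookkeeping argument: for $m <_\jc e$ and $l \in \lc(e)$ we have $ml \leq_\jc m <_\jc e$, so $ml \notin \lc(e)$ and $m$ acts as $0$ on $I$; hence $\ann_M(I) \supseteq I_{\jc(e)}$, forcing every composition factor of $I$ to have apex $\jc$-above $e$. By (1), only $V^{\#}$ has apex exactly $e$, and its multiplicity is one because $\dim_\kbf eI = \dim_\kbf V = \dim_\kbf e(V^{\#})$, so the exact sequence $0 \to \rad(I) \to I \to V^{\#} \to 0$ accounts entirely for the $e$-part. I expect the main obstacle to be part (3); the remaining pieces are careful bookkeeping around the single identity $e \cdot \indu_{G_e}^M(V) \cong V$, which underpins everything.
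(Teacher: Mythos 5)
First, a point of reference: the paper does not prove this theorem at all --- it quotes it as known and points to \cite[Section 5.2]{steinberg2016representation} for the proof. So there is no in-paper argument to compare against; your sketch has to stand on its own, and it follows the same standard route as the cited reference, with everything driven by the isomorphism $e\cdot\indu_{G_e}^M(V)\cong V$ and the fact that $\indu_{G_e}^M(V)$ is generated by $e\otimes V$. Parts (1), (2) and (4) are essentially correct: the identification $e\kbf\lc(e)=\kbf G_e$, the proof that $N_e(\indu_{G_e}^M(V))$ is the \emph{unique} maximal submodule (your argument in fact shows uniqueness, not just maximality, which is what simplicity of $V^{\#}$ requires), the simplicity of $eS$ via $eMe\cdot W=\kbf G_e\cdot W$ (which silently uses that $eMe\setminus G_e$ lies in $I_{\jc(e)}=\ann_M(S)$), and the multiplicity-one count via exactness of $W\mapsto eW$ are all the standard steps. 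Two details are glossed over but repairable: the identity $eMe\cap\jc(e)=G_e$ and the well-definedness of the evaluation map $\indu_{G_e}^M(eS)\to S$ both rest on stability of finite monoids ($ml\leql l$ and $ml \in \jc(l)$ imply $ml\in\lc(l)$), which you should invoke explicitly.

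The genuine gap is in part (3), and it is a circularity rather than a mere omission. You propose to show that a $\jc$-minimal class $J$ acting nontrivially on $S$ is regular by analyzing "the principal factor $J^{0}$ (a completely $0$-simple semigroup by Green's Lemma)" and "extracting an idempotent via the Rees matrix decomposition". But the principal factor of $J$ is completely $0$-simple precisely when $J$ is regular --- otherwise it is a null semigroup --- and the Rees decomposition applies only to completely $0$-simple semigroups, so this assumes exactly what is to be proved. The correct argument runs in the opposite direction: if $J$ is not regular then $JMJ\cap J=\emptyset$ (by the Location Theorem), so $MJMJM$ meets only $\jc$-classes strictly below $J$, all of which annihilate $S$ by minimality of $J$; since $\kbf(MJM)S$ is a nonzero submodule of the simple module $S$ it equals $S$, whence $S=\kbf(MJMJM)S=0$, a contradiction. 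One must then also prove that this minimal class is \emph{unique}, so that $\ann_M(S)$ equals $I_J$ on the nose; your sketch does not address uniqueness, and without it the apex is not well defined.
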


This allows us the following description of the $\lc$-class of an idempotent $e$.

\begin{prop}\label{prop:L_is_sum}
	Let $e \in M$ be an idempotent and $G_e$ be the maximal subgroup at $e$. Let $\irr_e$ be a set of representatives of the isomorphism classes of simple $\kbf G_e$-modules. Then: 
	$$\kbf \lc(e) = \bigoplus_{V\in \irr_e} \indu^M_{G_e}(V) \otimes_{\kbf} V^*$$
	where $V^*$ is the dual of $V$.
\end{prop}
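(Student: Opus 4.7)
My plan is to realize the stated isomorphism as one of $(\kbf M, \kbf G_e)$-bimodules and deduce it from the regular bimodule decomposition of $\kbf G_e$. The paper already equips $\kbf L$, for $L = \lc(e)$, with a left $\kbf M$-action by truncation and with a right $\kbf \Gamma'(H)$-action, where $H = \hc(e)$. Since $e$ is idempotent, the discussion following Definition~\ref{dftn:idem_and_maxgrp} identifies $\Gamma'(H)$ canonically with $G_e$, and under this identification the right action of $g \in G_e$ on $L$ is simply right multiplication inside $M$.

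The first step is to verify that these two actions commute, so that $\kbf L$ is an $(\kbf M, \kbf G_e)$-bimodule. The key observation is that, for $l \in L$ and $g \in G_e$ with inverse $g' \in G_e$, one has $(lg)g' = l(gg') = le = l$, using the fact that every element of $L = \lc(e)$ satisfies $le = l$. Consequently, for any $m \in M$, the identity $ml = (mlg)g'$ gives $Mml \subseteq Mmlg \subseteq Mml$, hence $Mml = Mmlg$, and in particular $ml \in L$ if and only if $mlg \in L$. The two possible orderings of the actions therefore both evaluate to $mlg$ when $ml \in L$ and both vanish otherwise.

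Once the bimodule structure is secured, the rest is purely formal. Since $\kbf$ is a perfect field of null characteristic that is ``big enough'', $\kbf G_e$ is split semisimple and admits the classical decomposition
\[\kbf G_e \;\cong\; \bigoplus_{V \in \irr_e} V \otimes_\kbf V^*\]
as $(\kbf G_e, \kbf G_e)$-bimodules, with the left factor carrying the defining $\kbf G_e$-action and $V^*$ carrying the contragredient right action. Substituting this into the trivial identity $\kbf L \cong \kbf L \otimes_{\kbf G_e} \kbf G_e$ yields
\[\kbf L \;\cong\; \bigoplus_{V \in \irr_e} \bigl(\kbf L \otimes_{\kbf G_e} V\bigr) \otimes_\kbf V^* \;=\; \bigoplus_{V \in \irr_e} \indu_{G_e}^M(V) \otimes_\kbf V^*\]
by the very definition of $\indu_{G_e}^M$, which is the claimed decomposition (in fact as $(\kbf M, \kbf G_e)$-bimodules, slightly stronger than the statement).

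I expect the only content-bearing step to be the commutation of the two actions on $\kbf L$; the semisimple splitting of $\kbf G_e$ and the passage through $- \otimes_{\kbf G_e} -$ are routine bookkeeping.
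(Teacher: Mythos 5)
Your proof is correct and follows essentially the same route as the paper's: both arguments reduce to the Wedderburn--Artin decomposition $\kbf G_e \cong \bigoplus_{V \in \irr_e} V \otimes_{\kbf} V^*$ combined with the distributivity of $-\otimes_{\kbf G_e}-$ over direct sums and the identity $\kbf\lc(e) \otimes_{\kbf G_e} \kbf G_e = \kbf\lc(e)$; you simply read the chain of equalities in the opposite direction. One caveat on the auxiliary commutation check you single out as the content-bearing step: from $ml = (mlg)g'$ you deduce $Mml = Mmlg$, but an identity of the form $a = bg'$ only gives $aM = bM$ (an $\rc$-equivalence), not $Ma = Mb$; the fact you actually need, namely $ml \in \lc(e) \Leftrightarrow mlg \in \lc(e)$, is nonetheless true because Green's Lemma makes right multiplication by $g \in G_e$ (and by its inverse $g'$) a bijection of $\lc(e)$ onto itself.
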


\begin{proof}
	By definition, $\indu^M_{G_e}(V) = \lc(e) \otimes_{\kbf G_e} V$. Now, since direct sum and tensor product over a ring with identity commute :
	\[\bigoplus_{V\in \irr_e} \indu^M_{G_e}(V)\otimes V^* = \bigoplus_{V\in \irr_e} \kbf\lc(e) \otimes_{G_e} V\otimes V^* = \kbf\lc(e) \otimes_{G_e} \left(\bigoplus_{V\in \irr_e} V\otimes V^*\right)\]
	Because $\kbf$ is of null characteristic, $\kbf G_e$ is semi-simple. By the Wedderburn-Artin theorem, $\kbf G_e = \bigoplus_{V\in \irr_e} V\otimes V^*$ so :
	\[\bigoplus_{V\in \irr_e} \indu^M_{G_e}(V) \otimes V^* = \kbf\lc(e) \otimes_{\kbf G_e} \kbf G_e = \lc(e)\]
	since $\kbf G_e$ is a ring with identity.
\end{proof}

Note that this puts in relation three kinds of modules : the simple $\kbf G_e$-modules, which are well understood, $\kbf \lc(e)$ which is understood as well because it is a combinatorial module\footnote{That is, the multiplication of an element of the basis of the module by an element of $M$ is either an other element of the basis or 0.}, and finally the modules $\indu^M_{G_e}(V)$ which contain, in a sense, the simple $\kbf M$-modules that we are after. According to the Clifford-Munn-Ponizovskii Theorem, we still need to remove the radical of each $\indu^M_{G_e}(V)$ factor. Proposition \ref{prop:rad_is_ne} puts the radical in a form similar to Theorem \ref{thm:CMP} while Proposition \ref{prop:quotient_semisimple} does exactly this. Lemma \ref{lem:PG} and its Corollary are technical results on radicals used in the proof of Proposition \ref{prop:rad_is_ne}.

\begin{lemme}\label{lem:PG}
	Let $A, B$ be two finite dimensional algebras over a perfect field $\kbf$. Then: 
	\[\rad(A\otimes B) = \rad(A)\otimes B + A \otimes \rad(B).\]
\end{lemme}

While we haven't be able to find a source for that claim it seems to be folklore in the algebra representation community. For the sake of completeness, we reproduce a proof communicated to us by Pr. Pierre-Guy Plamondon\footnote{Pr. Pierre-Guy Plamondon, Laboratoire de Mathématiques de Versailles, Université Paris-Saclay, \href{https://www.imo.universite-paris-saclay.fr/~plamondon/}{website}.}.

\begin{proof}
	Because $\kbf$ is perfect, Wedderburn's Principal Theorem applies and we get the decompositions $A = A' \oplus \rad(A), B = B' \oplus \rad(B)$, with $A'$ and $B'$ semi-simple algebras. To prove the result, we show that $\rad(A)\otimes B + A \otimes \rad(B)$ is a nil radical and that \[\faktor{A \otimes B}{\rad(A)\otimes B + A \otimes \rad(B)}\] is semi-simple.
	Let us first show that the quotient is semi-simple.
	We have:
	\[
	\begin{aligned}
		A\otimes B & = (A' \oplus \rad(A)) \otimes (B' \oplus \rad(B))\\
		& = A'\otimes B' \oplus A'\otimes\rad(B) \oplus \rad(A)\otimes B' \oplus \rad(A)\otimes\rad(B).
	\end{aligned}
	\]
	On the other hand, the same decompositions give us 
	\[A \otimes \rad(B) \oplus \rad(A) \otimes B = A'\otimes\rad(B) \oplus \rad(A)\otimes B' \oplus \rad(A)\otimes\rad(B).\]
	Finally,
	\[\faktor{A\otimes B}{A \otimes \rad(B) \oplus \rad(A) \otimes B} = A' \otimes B'\]
	which, since $A', B'$ are semi-simple, is also semi-simple. $A \otimes \rad(B) \oplus \rad(A) \otimes B$ is also nil, because $\rad(B)$ and $\rad(A)$ are, so, indeed, $\rad(A \otimes B) = A \otimes \rad(B) \oplus \rad(A) \otimes B$.
\end{proof}

From Lemma \ref{lem:PG}, we get the following Corollary by recalling that if $V$ is a $A$-module, $\rad_A(V) = \rad(A)\cdot V$.

\begin{cor}
	Let $A, B$ be two finite dimensional unitary algebras over a perfect field. If $V_A \otimes V_B$ is a $A-\module-B$ (or equivalently a $A\otimes B^{op}-\module$), then 
	\[\rad_{A\otimes B^{op}}(V_A \otimes V_B) = \rad_A(V_A)\otimes B + A\otimes \rad_B(V_B).\]
\end{cor}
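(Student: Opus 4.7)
The plan is to deduce this corollary from the preceding Lemma by translating the algebra-level statement into a module-level one. The general principle is that for any finite-dimensional algebra $C$ and any $C$-module $V$, one has $\rad_C(V) = \rad(C) \cdot V$, and for a unitary algebra acting on a unitary module one has $C \cdot V = V$. So the strategy is to identify $\rad(A \otimes B^{op})$ using the Lemma and then act on $V_A \otimes V_B$.

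First I would observe that $\rad(B^{op}) = \rad(B)$, since the Jacobson radical is invariant under passing to the opposite algebra (it can for instance be characterized two-sidedly as the largest nilpotent ideal of a finite-dimensional algebra). Applying the Lemma to $A$ and $B^{op}$ then yields
\[\rad(A \otimes B^{op}) = \rad(A) \otimes B^{op} + A \otimes \rad(B).\]

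Next I would act with this ideal on $V_A \otimes V_B$. Under the $A \otimes B^{op}$-action, an elementary tensor $a \otimes b$ sends $v_A \otimes v_B$ to $(av_A) \otimes (v_B b)$. Expanding the sum above and using unitarity ($V_B \cdot B = V_B$ and $A \cdot V_A = V_A$), one gets
\[(\rad(A) \otimes B^{op}) \cdot (V_A \otimes V_B) = \rad(A) V_A \otimes V_B = \rad_A(V_A) \otimes V_B,\]
and symmetrically $(A \otimes \rad(B^{op})) \cdot (V_A \otimes V_B) = V_A \otimes \rad_B(V_B)$. Summing these gives the result (with the understanding that the right-hand side should read $\rad_A(V_A)\otimes V_B + V_A \otimes \rad_B(V_B)$, the only interpretation that makes the equality type-check as a subspace of $V_A \otimes V_B$).

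There is no serious obstacle here once the Lemma is in hand; the entire content is the passage from algebra to module radical, which is automatic for finite-dimensional algebras. The only care needed is bookkeeping: keeping the left $A$-action and right $B$-action (equivalently, the left $B^{op}$-action) straight, and correctly recognizing $\rad(B^{op}) = \rad(B)$ so that the Lemma can be invoked.
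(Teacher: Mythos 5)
Your proposal is correct and follows essentially the same route as the paper, which derives the corollary from Lemma \ref{lem:PG} in one line by invoking $\rad_A(V) = \rad(A)\cdot V$; you simply spell out the bookkeeping (including $\rad(B^{op}) = \rad(B)$ and the unitarity step) that the paper leaves implicit. Your remark that the right-hand side should read $\rad_A(V_A)\otimes V_B + V_A \otimes \rad_B(V_B)$ is also a correct reading of what is evidently a typo in the statement.
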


This allows us to identify the radical of $\kbf \lc(e)$.

\begin{prop}\label{prop:rad_is_ne}
	Let $M$ be a finite monoid, $e \in S$ an idempotent $G_e$ be the maximal subgroup at $e$ and $\kbf$ be a perfect field. 
	Then: 
	$$\rad_{\kbf M \otimes \kbf G_e^{op}}(\kbf \lc(e)) = N_e(\kbf \lc(e)).$$
\end{prop}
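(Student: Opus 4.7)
The approach I would take is to leverage the decomposition from Proposition \ref{prop:L_is_sum} to reduce the problem to a summand-by-summand computation, then apply the Corollary of Lemma \ref{lem:PG} to each summand. This lets us split the radical of a tensor product into a contribution from each factor, where one factor can be handled by semisimplicity of $\kbf G_e$ and the other by Theorem \ref{thm:CMP}.

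First I would write $\kbf \lc(e) = \bigoplus_{V \in \irr_e} \indu^M_{G_e}(V) \otimes_{\kbf} V^*$ as a $\kbf M \otimes \kbf G_e^{op}$-module, observing that the left $\kbf M$-action acts on the first factor and the right $\kbf G_e$-action acts only on $V^*$. Since the radical of a finite direct sum of modules is the direct sum of the radicals, it suffices to compute $\rad_{\kbf M \otimes \kbf G_e^{op}}(\indu^M_{G_e}(V) \otimes V^*)$ for each simple $\kbf G_e$-module $V$.

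Next, I would apply the Corollary of Lemma \ref{lem:PG} with $A = \kbf M$ and $B = \kbf G_e^{op}$ to obtain
\[
\rad_{\kbf M \otimes \kbf G_e^{op}}(\indu^M_{G_e}(V) \otimes V^*) = \rad_{\kbf M}(\indu^M_{G_e}(V)) \otimes V^* + \indu^M_{G_e}(V) \otimes \rad_{\kbf G_e^{op}}(V^*).
\]
Because $\kbf$ has null characteristic, $\kbf G_e$ and hence $\kbf G_e^{op}$ is semisimple, so $V^*$ (being simple) satisfies $\rad_{\kbf G_e^{op}}(V^*) = 0$, killing the second term. Theorem \ref{thm:CMP}(2) then lets me rewrite the first term as $N_e(\indu^M_{G_e}(V)) \otimes V^*$.

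Finally I would reassemble the summands. The functor $N_e$ is defined as a pointwise annihilator condition depending only on the left $\kbf M$-action, so it commutes with direct sums and, since the $M$-action on $\indu^M_{G_e}(V) \otimes V^*$ passes only through the first tensor factor, one also has $N_e(\indu^M_{G_e}(V) \otimes V^*) = N_e(\indu^M_{G_e}(V)) \otimes V^*$. Summing over $V \in \irr_e$ and comparing with Proposition \ref{prop:L_is_sum} yields the claimed identity. The step that I expect to require the most care is verifying that the equality $\rad(\indu^M_{G_e}(V)) = N_e(\indu^M_{G_e}(V))$ really follows from Theorem \ref{thm:CMP}(2)—one direction is immediate from the stated equality $\rad = N_e(\rad)$, while the reverse inclusion requires invoking the fact that the simple top $V^\#$ has apex $e$ and therefore admits no nonzero element annihilated by $eM$.
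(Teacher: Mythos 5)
Your proposal is correct and follows essentially the same route as the paper's proof: decompose $\kbf\lc(e)$ via Proposition \ref{prop:L_is_sum}, apply the Corollary of Lemma \ref{lem:PG} summand by summand, kill the $\rad_{\kbf G_e^{op}}(V^*)$ term by simplicity of $V^*$, identify $\rad_{\kbf M}(\indu_{G_e}^M(V))$ with $N_e(\indu_{G_e}^M(V))$ via Theorem \ref{thm:CMP}(2), and reassemble using the compatibility of $N_e$ with direct sums and with tensoring by $V^*$. Your closing remark is well taken: the paper cites Theorem \ref{thm:CMP}(2) without comment, whereas you correctly note that the inclusion $N_e(\indu_{G_e}^M(V)) \subseteq \rad(\indu_{G_e}^M(V))$ needs the extra observation that $V^{\#}$, having apex $e$, contains no nonzero vector annihilated by $eM$.
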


\begin{proof}
	Using Lemma \ref{lem:PG}, for $V$ a simple $G_e$-module, we have that :
	\[\begin{aligned}
	\rad_{\kbf M \otimes \kbf G_e^{op}}& (\indu^S_{G_e}(V) \otimes_{\kbf} V^*) \\
	= & \rad_{\kbf M} \indu_{G_e}^M(V) \otimes V^* + \indu_{G_e}^M(V) \otimes \rad_{\kbf G_e^{op}}(V^*)\\
	\overset{(1)}{=} & \rad_{\kbf M} \indu_{G_e}^M(V) \otimes V^*\\
	\overset{(2)}{=} & N_e(\indu_{G_e}^M(V)) \otimes V^*
	\end{aligned}\]
	where equality (1) comes from the simplicity of $V^*$ as a $\kbf G_e^{op}$-module and (2) is the second point of Theorem \ref{thm:CMP}.
	
	Since radical and direct sums commute, denoting by $\irr_e$ a set of representatives of the isomorphism classes of simple $\kbf G_e$-modules, we know that:
	\[\rad_{\kbf M \otimes \kbf G_e^{op}}(\lc(e)) = \bigoplus_{V \in \irr_e} N_e(\indu_{G_e}^M(V)) \otimes V^*.\]
	It remains to be seen why 
	\[\bigoplus_{V \in \irr_e} N_e(\indu_{G_e}^M(V)) \otimes V^* = N_e(\kbf\lc(e)).\]
	It is clear the direct sum on the left is a subset of the set on the right. 
	For the other inclusion, we see that if $V, V'$ are $\kbf M$-modules, $N_e(V \oplus V') = N_e(V) \oplus N_e(V')$. 
	Given the proposition \ref{prop:L_is_sum}, it is enough to show that $N_e(\indu_{G_e}^M(V)) \otimes V^* = N_e(\indu_{G_e}^M(V) \otimes V^*)$. 
	Let $x \in \indu_{G_e}^M(V) \otimes V^*$ be such that for every $m \in M, emx = 0$. $x$ can be writen as $\sum_i (\sum_j x_{i,j} b_j) \otimes b'_i$ where $\{b_j\}_j$ is a basis of $\indu_{G_e}^M(V)$ and $\{b'_i\}_i$ is a basis of $V^*$. For every $m \in M$, we have :
	\[em \cdot x = em \cdot \sum_i (\sum_j x_{i,j} b_j) \otimes b'_i = \sum_i (em \cdot \sum_j x_{i,j} b_j) \otimes b'_i = 0\]
	that is, for every $b'_i$ we get $em \cdot \sum_j x_{i,j}b_j = 0$ so $\sum_j x_{i,j}b_j \in N_e(\indu_{G_e}^M(V))$ which means $x \in N_e(\indu_{G_e}^M(V)) \otimes V^*$.
\end{proof}

\begin{prop}\label{prop:quotient_semisimple}
	Let $e \in M$ be an idempotent and $G_e$ be the maximal subgroup at $e$. Let $\irr_e$ be a set of representatives of the isomorphism classes of simple $\kbf G_e$-modules.
	Then:
	\[\kbf \lc(e) / \rad_{\kbf M \otimes \kbf G_e^{op}}(\kbf\lc(e)) \cong \bigoplus_{V \in \irr_e} V^{\#} \otimes V^*.\]
\end{prop}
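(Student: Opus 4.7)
The plan is to chain together the three main preparatory results already proved: Proposition \ref{prop:L_is_sum} to split $\kbf\lc(e)$ as a direct sum, Proposition \ref{prop:rad_is_ne} to identify the radical, and Theorem \ref{thm:CMP} to recognize the quotients of the individual summands as the simple modules $V^{\#}$.

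First I would start from Proposition \ref{prop:L_is_sum}, which decomposes $\kbf\lc(e)$ as a $\kbf M \otimes \kbf G_e^{op}$-module into the direct sum $\bigoplus_{V \in \irr_e} \indu^M_{G_e}(V) \otimes_{\kbf} V^*$. Next, I would invoke (the internal argument of the proof of) Proposition \ref{prop:rad_is_ne}, which actually establishes the finer statement
\[
N_e(\kbf\lc(e)) \;=\; \bigoplus_{V \in \irr_e} N_e(\indu_{G_e}^M(V)) \otimes V^*,
\]
and which combined with Proposition \ref{prop:rad_is_ne} itself gives that this sum is exactly the radical $\rad_{\kbf M \otimes \kbf G_e^{op}}(\kbf\lc(e))$. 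Since radicals are sub-bimodules and direct sums commute with quotients, taking the quotient summand-by-summand yields
\[
\kbf\lc(e) / \rad_{\kbf M \otimes \kbf G_e^{op}}(\kbf\lc(e)) \;\cong\; \bigoplus_{V \in \irr_e} \bigl(\indu^M_{G_e}(V) / N_e(\indu^M_{G_e}(V))\bigr) \otimes V^*.
\]

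To finish, I would apply parts (1) and (2) of Theorem \ref{thm:CMP}: part (2) says $\rad(\indu_{G_e}^M(V)) = N_e(\indu_{G_e}^M(V))$, and part (1) then gives $\indu_{G_e}^M(V)/\rad(\indu_{G_e}^M(V)) = V^{\#}$. Substituting back produces exactly $\bigoplus_{V \in \irr_e} V^{\#} \otimes V^*$, as desired.

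The main subtlety — essentially the only non-bookkeeping step — is that the radical on the right of the first display above is a priori only the $\kbf M$-radical of $\indu^M_{G_e}(V)$, whereas the radical on the left is the $\kbf M \otimes \kbf G_e^{op}$-radical of the tensor product; this is precisely where Lemma \ref{lem:PG} and its Corollary are used inside Proposition \ref{prop:rad_is_ne} (the factor $V^*$ being simple as a $\kbf G_e^{op}$-module kills the $\rad_{\kbf G_e^{op}}(V^*)$ contribution). Once this identification is in hand, the remainder of the argument is purely formal manipulation of direct sums and quotients.
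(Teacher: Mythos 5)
Your proposal is correct and follows essentially the same route as the paper's proof: combine Proposition \ref{prop:L_is_sum} with the summand-wise description of the radical from Proposition \ref{prop:rad_is_ne}, then identify each quotient $\bigl(\indu^M_{G_e}(V)\otimes V^*\bigr)/\bigl(N_e(\indu^M_{G_e}(V))\otimes V^*\bigr)$ with $V^{\#}\otimes V^*$ via Theorem \ref{thm:CMP}. The only cosmetic difference is that the paper makes the last identification explicit by tensoring the short exact sequence $0 \to N_e(\indu^M_{G_e}(V)) \to \indu^M_{G_e}(V) \to V^{\#} \to 0$ with $V^*$ and citing right exactness, whereas you fold that step into ``formal manipulation''; this is harmless.
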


\begin{proof}
	From Proposition \ref{prop:rad_is_ne}, we have a decomposition of $\rad_{\kbf M \otimes \kbf G_e^{op}}(\lc(e))$ as a direct sum adapted to the decomposition of $\kbf\lc(e)$ as $\bigoplus_{V\in \irr_e} \indu^S_{G_e}(V) \otimes_{\kbf} V^*$. So:
	\[\kbf \lc(e) / \rad_{\kbf M \otimes \kbf G_e^{op}}(\kbf\lc(e)) \cong \bigoplus_{V \in \irr_e} (\indu^S_{G_e}(V) \otimes_{\kbf} V^*) / (N_e(\indu^S_{G_e}(V))\otimes V^*).\]
	From Theorem \ref{thm:CMP}, we know that
	\[0 \longrightarrow N_e(\indu^S_{G_e}(V)) \longrightarrow \indu^S_{G_e}(V) \longrightarrow V^{\#} \longrightarrow 0\]
	is a short exact sequence. Since $N_e(\indu^S_{G_e}(V))\otimes V^*$ a submodule of $\indu^S_{G_e}(V)\otimes V^*$ and because tensor product is right exact we have a short exact sequence :
	\[0 \longrightarrow N_e(\indu^S_{G_e}(V))\otimes V^* \longrightarrow \indu^S_{G_e}(V)\otimes V^* \longrightarrow V^{\#}\otimes V^* \longrightarrow 0\]
	which proves the result.
\end{proof}
	
	\subsection{On characters}\label{sub:char}
	One of the major features of the finite group representation theory is the fact that all the information on a representation can be summarized in its \emph{character}. This (partially) carries over to monoid representation theory, as we shall see in this section where we reformulate the results of the previous section in terms of characters.

\begin{dftn}
	If $V$ is a finite dimension $\kbf M$-module, its \emph{character} is the map from $M$ to $\kbf$ defined by $\chi_{\kbf M}^V : m \longmapsto \tr(v \mapsto m \cdot v)$.
\end{dftn}

We recall the following well know facts about characters.  Proofs for fact 2 and 3 are respectively (ii) and (iii) of \cite[Proposition 7.12]{steinberg2016representation}\footnote{Note that for Fact 3, our reference deals only with the case $M = M'$, but the proof is the same.}.

\begin{prop}\label{prop:facts_on_char}
	\begin{enumerate}
		\item Let $V$ be a $\kbf M$-module. We have $\chi_{\kbf M}^V = \chi_{\kbf M^{op}}^{V^*}$.
		\item Consider the short exact sequence of $\kbf M$-modules :
		\[0 \longrightarrow A \longrightarrow B \longrightarrow B/A \longrightarrow 0.\]
		Then $\chi_{\kbf M}^{B/A} = \chi_{\kbf M}^B - \chi_{\kbf M}^A$.
		\item Consider $M, M'$ two finite monoids, $V$ a $\kbf M$-module and $W$ a $\kbf M'^{op}$-module. Then $\chi_{\kbf M \otimes \kbf M'}^{V \otimes W} = \chi_{\kbf M}^{V} \chi_{\kbf M'}^{W}$.
	\end{enumerate}
\end{prop}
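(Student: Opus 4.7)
The plan is to dispatch each of the three items by choosing bases adapted to the situation and reading off traces; since all three statements are classical, I would only sketch the arguments.

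For item 1, I would use that $V^*$ carries a canonical left $\kbf M^{op}$-action given by $(m\cdot\phi)(v) = \phi(mv)$. Taking a basis $(e_i)$ of $V$ with dual basis $(e_i^*)$, if $m$ acts on $V$ by a matrix $A = (A_{ij})$, a direct computation $m\cdot e_k^*(e_i) = e_k^*(m\cdot e_i) = A_{ki}$ shows that $m$ acts on $V^*$ by the transpose $A^T$. Since trace is invariant under transposition, $\chi_{\kbf M}^V(m) = \chi_{\kbf M^{op}}^{V^*}(m)$.

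For item 2, I would fix a basis $(a_i)$ of $A$ and extend it to a basis $(a_i, b_j)$ of $B$, so that the images $(\bar b_j)$ form a basis of $B/A$. Because $A$ is a submodule, in this ordered basis the matrix of any $m \in M$ on $B$ is block upper-triangular, with diagonal blocks equal to the matrices of $m$ acting on $A$ and on $B/A$. Taking traces yields $\chi_{\kbf M}^B = \chi_{\kbf M}^A + \chi_{\kbf M}^{B/A}$, and rearranging gives the claimed identity.

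For item 3, I would pick bases of $V$ and $W$ and use their tensor products as a basis of $V \otimes W$. On a simple tensor $m \otimes m' \in \kbf M \otimes \kbf M'^{op}$, the action is $(m\otimes m')\cdot(v\otimes w) = mv \otimes m'w$, whose matrix in the tensor basis is the Kronecker product of the matrices of $m$ on $V$ and of $m'$ on $W$. The trace of a Kronecker product being the product of traces gives the identity on simple tensors, and it extends to the whole tensor product algebra by bilinearity.

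The only delicate point, and my expected main obstacle, is bookkeeping of the opposite-algebra conventions: one must check in item 1 that the transpose really gives an $M^{op}$-action rather than an $M$-action, and in item 3 that the natural identification $\kbf(M \times M'^{op}) \cong \kbf M \otimes \kbf M'^{op}$ intertwines the two module structures as expected. No deeper difficulty arises beyond these conventions.
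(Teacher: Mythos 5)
Your three arguments (transpose for the dual, block upper-triangular matrices for the short exact sequence, Kronecker product for the tensor product) are correct and are exactly the standard proofs the paper has in mind: it does not write them out, instead remarking that they are ``simply extensions of similar properties on groups'' and citing \cite[Proposition 7.12]{steinberg2016representation} for items 2 and 3. Your attention to the $M^{op}$ convention in item 1 is the right point to be careful about, and your check that $(m\cdot\phi)(v)=\phi(mv)$ defines a left $\kbf M^{op}$-action is sound.
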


The previous properties are simply extensions of similar properties on groups, and their proof is similar. From groups, we also keep in the case of monoids the linear independence of irreducible characters (see \cite[Theorem 7.7]{steinberg2016representation} for reference):

\begin{prop}\label{prop:char_libres}
	The irreducible characters $\{\chi_{\kbf M}^S \sepp S \textrm{ is a simple } \kbf M-\module\}$ are linearly independent as $\kbf$ valued functions.
\end{prop}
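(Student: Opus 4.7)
The approach is to reduce the question to the semisimple quotient $A := \kbf M / \rad(\kbf M)$ and then invoke the Wedderburn-Artin structure theorem, exploiting the hypothesis that $\kbf$ is a splitting field of characteristic zero.

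First I would note that every character $\chi_{\kbf M}^S$ extends by linearity to a $\kbf$-linear functional on $\kbf M$, and since $M$ is a $\kbf$-basis of $\kbf M$, linear independence as functions on $M$ is equivalent to linear independence as functionals on $\kbf M$. Next, if $S$ is simple then $\rad(\kbf M) \cdot S = 0$, so each $\chi_{\kbf M}^S$ factors through the canonical projection $\kbf M \twoheadrightarrow A$. It therefore suffices to prove that the induced characters on $A$ are linearly independent. Because $\kbf M$ is finite dimensional, $A$ is a finite-dimensional semisimple $\kbf$-algebra, and the simple $\kbf M$-modules are exactly the simple $A$-modules.

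The key step is then the decomposition afforded by Wedderburn-Artin: since $\kbf$ is a splitting field,
\[
A \;\cong\; \prod_{S \in \irr(\kbf M)} \End_{\kbf}(S),
\]
and under this isomorphism the action of $A$ on a simple module $S_0$ is the projection onto the factor $\End_{\kbf}(S_0)$ composed with the tautological action on $S_0$; every other factor $\End_{\kbf}(S)$ acts by zero on $S_0$. In particular, for each simple $S_0$ I can exhibit the element $\epsilon_{S_0} \in A$ corresponding to $(\Id_{S_0}, 0, 0, \ldots)$ in the Wedderburn decomposition. Then $\chi_{\kbf M}^{S_0}(\epsilon_{S_0}) = \dim_{\kbf}(S_0)$, while $\chi_{\kbf M}^{S}(\epsilon_{S_0}) = 0$ for any simple $S \not\cong S_0$.

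Given a putative relation $\sum_{S} c_S \chi_{\kbf M}^S = 0$, evaluating both sides at each $\epsilon_{S_0}$ yields $c_{S_0}\dim_{\kbf}(S_0) = 0$, and since $\kbf$ has characteristic zero the dimension is nonzero in $\kbf$, forcing $c_{S_0} = 0$. The only delicate point is ensuring the Wedderburn decomposition really produces a product of \emph{matrix} algebras rather than of arbitrary division algebras over $\kbf$; this is precisely where the ``big enough'' hypothesis on $\kbf$ stated at the top of Section \ref{sec:mod} is used. Once this is in hand, the argument is purely formal, so I expect no further obstacle.
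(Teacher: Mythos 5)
Your proof is correct. The paper itself does not prove this statement but defers to \cite[Theorem 7.7]{steinberg2016representation}, and the argument given there is essentially yours: the characters factor through the semisimple quotient $\kbf M/\rad(\kbf M)$, and evaluating at the central idempotents of its Wedderburn blocks isolates each coefficient as $c_{S_0}\dim_\kbf(S_0)$, which vanishes only if $c_{S_0}=0$ in characteristic zero. One small refinement: the splitting hypothesis you flag as the delicate point is not actually needed, since even when a block is a matrix algebra over a nontrivial division algebra the central idempotent $\epsilon_{S_0}$ still exists and still satisfies $\chi^{S_0}(\epsilon_{S_0})=\dim_\kbf(S_0)$ and $\chi^{S}(\epsilon_{S_0})=0$ for $S\not\cong S_0$, so only the characteristic-zero hypothesis does real work here.
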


This, together with the second point in the Proposition \ref{prop:facts_on_char}, has a nice consequence. As we are interested in finite dimensional module over finite monoids, those modules have a composition series. Say that a $\kbf M$-module $V$, has $S$ as a composition factor with multiplicity $[V:S]$ for any simple $\kbf M$-module $S$. Then:
\[\chi_{\kbf M}^V = \sum_S [V:S]\chi_{\kbf M}^S.\]
In that way, since characters of the simple modules are linearly independent, the character of a module can be seen as a record of its composition factors.

The question of where to compute characters is worth asking: in the case of groups, one needs only to compute the character for a transversal of conjugacy classes to get its value everywhere. The case of monoids was described for the first time by McAlister in \cite{mcalister1972characters}. 

\begin{dftn}\label{def:conj}
	We say that two elements $m, m'$ in $M$ are in the same \emph{generalized conjugacy class} or \emph{character equivalency class} if for every $\kbf M$-module $V$, $\chi_M^V(m) = \chi_M^V(m')$. We note $C_M$ the set of generalized conjugacy classes.
\end{dftn}

\begin{prop}(\cite[Proposition 2.5]{mcalister1972characters})\label{prop:char_equiv}
	Let $\mathcal{E} = \{e_1, \dots, e_n\}$ be idempotent representatives of the regular $\jc$-classes of $M$ and for each $e_i$ let $\mathcal{C}_i = \{c_{i,1}, \dots, c_{i,m_i}\}$ be representatives of the conjugacy classes of $G_{e_i}$. Then the set $\mathcal{C}_M = \bigcup_{e_i \in \mathcal{E}} \mathcal{C}_i$ is a set of representatives of character equivalency classes of $M$.
\end{prop}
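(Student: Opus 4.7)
My plan is to show that the natural map $\mathcal{C}_M \to C_M$, sending each $c_{i,j}$ to its character equivalency class, is a bijection. The Clifford-Munn-Ponizovskii Theorem~\ref{thm:CMP} parameterizes the isomorphism classes of simple $\kbf M$-modules by pairs $(e_i, V)$ with $V \in \irr_{e_i}$; since $\kbf$ has characteristic zero, $|\irr_{e_i}|$ equals the number of conjugacy classes $m_i$ of $G_{e_i}$, so the total count of irreducible characters is $\sum_i m_i = |\mathcal{C}_M|$. I would then establish injectivity and surjectivity of the map separately.

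The central technical tool for injectivity is a restriction identity: for $g \in G_{e_i}$ and a simple $\kbf M$-module $V^\#$ with apex $e_i$ corresponding to $V \in \irr_{e_i}$, one has $\chi^{V^\#}_{\kbf M}(g) = \chi^V_{\kbf G_{e_i}}(g)$. Indeed, $g = e_i g e_i$ forces $g \cdot V^\# \subseteq e_i V^\# = V$, so that in a basis of $V^\#$ extending a basis of $V$ only the $V$-to-$V$ block contributes to the trace. Combined with the observation that any $c \in G_{e_i}$ annihilates a simple $V^\#$ whose apex $e_{i'}$ satisfies $J_i \not \leqj J_{i'}$ (since then $c \in I_{J_{i'}} = \ann_M(V^\#)$), injectivity follows: two representatives in a common $\mathcal{C}_i$ are separated by the character table of $G_{e_i}$ transported through the restriction identity, and two representatives from distinct $\mathcal{C}_i, \mathcal{C}_{i'}$ are separated by lifting the trivial $\kbf G_{e_i}$-module whenever $J_{i'}$ does not $\jc$-dominate $J_i$.

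The main obstacle is surjectivity: every $m \in M$ must be shown to be character-equivalent to some $c_{i,j}$. Let $m^\omega$ denote the unique idempotent in the cyclic subsemigroup $\langle m \rangle$, and set $\tilde m = m \cdot m^\omega = m^{\omega+1}$, which lies in the maximal subgroup $G_{m^\omega}$. The key identity to prove is that $\chi^{V^\#}_{\kbf M}(m) = \chi^{V^\#}_{\kbf M}(\tilde m)$ for every simple $\kbf M$-module $V^\#$. Since $m^\omega$ is idempotent, it acts as a projection on $V^\#$, giving a Fitting decomposition $V^\# = \ker(m^\omega) \oplus \im(m^\omega)$ with both summands $m$-invariant. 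On the kernel, $m$ is nilpotent (so contributes zero trace) and $m^{\omega+1}$ vanishes; on the image, $m^\omega$ restricts to the identity, so $m^{\omega+1}$ agrees with $m$. Thus the traces of $m$ and $\tilde m$ coincide on each summand. Finally, $\tilde m \in G_{m^\omega}$ corresponds through the canonical isomorphism $G_{m^\omega} \cong G_{e_i}$ (where $\jc(m^\omega) = J_i$, with $xy = m^\omega$, $yx = e_i$) to $y \tilde m x \in G_{e_i}$; the cyclic property of the trace gives $\chi^{V^\#}_{\kbf M}(y \tilde m x) = \chi^{V^\#}_{\kbf M}(\tilde m x y) = \chi^{V^\#}_{\kbf M}(\tilde m \cdot m^\omega) = \chi^{V^\#}_{\kbf M}(\tilde m)$. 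A further application of trace-cyclicity inside $G_{e_i}$ replaces $y \tilde m x$ by any of its $G_{e_i}$-conjugates, in particular the chosen representative $c_{i,j}$, completing the chain $m \leftrightarrow \tilde m \leftrightarrow y \tilde m x \leftrightarrow c_{i,j}$ of character-equivalences.
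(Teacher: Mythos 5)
Your argument is correct, but there is nothing in the paper to compare it against: the proposition is quoted from McAlister's 1972 article and the paper gives no internal proof. What you have written is essentially the classical argument (close to McAlister's original proof and to the treatment in Chapter~7 of \cite{steinberg2016representation}): surjectivity by reducing $m$ to the group-bound element $m^{\omega+1}\in G_{m^\omega}$ via the decomposition $V=\ker(m^\omega)\oplus\im(m^\omega)$ into $m$-invariant summands, then transporting to the reference idempotent $e_i$ and to a conjugacy representative by cyclicity of the trace; injectivity by combining the restriction identity $\chi^{V^{\#}}_{\kbf M}(g)=\chi^{e_iV^{\#}}_{\kbf G_{e_i}}(g)$ with the apex/annihilator criterion. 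Both halves are sound (and your $\jc$-inequality is stated in the paper's reversed convention, correctly). Two small points deserve attention. First, to separate $c\in G_{e_i}$ from $c'\in G_{e_{i'}}$ with $i\neq i'$ you invoke the lift of the trivial $\kbf G_{e_i}$-module only ``whenever $J_{i'}$ does not $\jc$-dominate $J_i$''; you should add that in the remaining case antisymmetry of the $\jc$-order on classes forces $J_i\not\leqj J_{i'}$, so the symmetric argument with the trivial $\kbf G_{e_{i'}}$-module applies, and the two cases together cover every pair of distinct regular $\jc$-classes. Second, both the count $|\irr_{e_i}|=m_i$ and the separation of non-conjugate elements of $G_{e_i}$ by irreducible characters require $\kbf$ to be a splitting field for the maximal subgroups; this is covered by the paper's standing ``big enough field'' hypothesis but is genuinely needed (over the rationals the statement already fails for a cyclic group of order $3$). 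Finally, the dimension count in your first paragraph is redundant: injectivity and surjectivity of the map $\mathcal{C}_M\to C_M$ already give the result, and conversely the count together with the linear independence of irreducible characters (Proposition \ref{prop:char_libres}) could replace the injectivity argument entirely.
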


We can now recall the definition of the character table of a monoid.

\begin{dftn}
	Let $\irr_M$ be the set of isomorphism classes of simple $\kbf M$-modules and $C_M$ as in definition \ref{def:conj}.
	The \emph{character table} of $M$ over $\kbf$ is the (square) matrix defined by :
	\[X(M) = (\chi_{\kbf M}^V(m))_{V \in \irr_M, m \in C_M}.\]
	Moreover, if $e \in M$ is an idempotent, we define $X_e(M)$ as the matrix obtained by extracting from $X(M)$ only the lines corresponding to simple modules with apex $e$.
\end{dftn}

Finally, we can apply the language of characters to Proposition \ref{prop:quotient_semisimple}, which yield a formula for computing the character table of $M$ over $\kbf$ given the character tables of the groups $G_e$ over $k$.

\begin{prop}\label{prop:formula_char}
	Let $e \in M$ be an idempotent, $G_e$ be the maximal subgroup at $e$. 
	We have the formula for $X_e(M)$ : 
	\[X_e(M) = {}^tX(G_e)\inv \cdot \left(\chi_{\kbf M \otimes \kbf G_e^{op}}^{\kbf\lc(e)}(m, g) - \chi_{\kbf M \otimes \kbf G_e^{op}}^{N_e(\kbf\lc(e))}(m, g)\right)_{g \in C_{G_e}, m \in C_M}\]
	where the dot is the matrix product.
\end{prop}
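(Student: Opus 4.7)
The strategy is to combine Propositions \ref{prop:rad_is_ne} and \ref{prop:quotient_semisimple} into a short exact sequence, read it off at the level of characters via Proposition \ref{prop:facts_on_char}, and then recognize the resulting identity as a matrix equation that can be solved for $X_e(M)$.

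First I would observe that Propositions \ref{prop:rad_is_ne} and \ref{prop:quotient_semisimple} together yield, for each idempotent $e$, the short exact sequence of $\kbf M \otimes \kbf G_e^{op}$-modules
\[
0 \longrightarrow N_e(\kbf\lc(e)) \longrightarrow \kbf\lc(e) \longrightarrow \bigoplus_{V \in \irr_e} V^{\#} \otimes V^* \longrightarrow 0.
\]
Applying point 2 of Proposition \ref{prop:facts_on_char} (additivity of characters on short exact sequences) together with point 3 (multiplicativity on tensor products) and point 1 (identification $\chi^{V^*}_{\kbf G_e^{op}} = \chi^{V}_{\kbf G_e}$), evaluating at a pair $(m,g) \in C_M \times C_{G_e}$ gives
\[
\chi^{\kbf\lc(e)}_{\kbf M \otimes \kbf G_e^{op}}(m,g) - \chi^{N_e(\kbf\lc(e))}_{\kbf M \otimes \kbf G_e^{op}}(m,g) = \sum_{V \in \irr_e} \chi^{V^{\#}}_{\kbf M}(m)\,\chi^{V}_{\kbf G_e}(g).
\]

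Next I would interpret this pointwise identity as a matrix factorization. By Theorem \ref{thm:CMP}, the map $V \mapsto V^\#$ is a bijection between $\irr_e$ and the simple $\kbf M$-modules of apex $e$, so the family $(\chi^{V^{\#}}_{\kbf M}(m))_{V,m}$ is exactly $X_e(M)$ (indexed by $V \in \irr_e$), while $(\chi^{V}_{\kbf G_e}(g))_{V,g}$ is $X(G_e)$. Writing the right-hand side as a sum over $V$ identifies it with the $(g,m)$-entry of the matrix product ${}^tX(G_e)\cdot X_e(M)$. Thus the displayed matrix on the right of the claim equals ${}^tX(G_e)\cdot X_e(M)$.

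The only remaining point is to left-multiply by ${}^tX(G_e)^{-1}$; this requires that $X(G_e)$ be invertible, which holds since $\kbf$ has characteristic zero and $G_e$ is a finite group, so its character table is a square matrix of linearly independent rows (Proposition \ref{prop:char_libres} applied to $G_e$, combined with the classical fact that the number of irreducible characters equals the number of conjugacy classes of $G_e$). The main subtlety to be careful about is the indexing convention: one must check that the ``rows'' of $X_e(M)$ are indexed by the same set $\irr_e$ that indexes $X(G_e)$ via the bijection $V \leftrightarrow V^{\#}$, and that the transpose on $X(G_e)$ accounts for the sum being over $V$ rather than over $(g,m)$. Once this bookkeeping is in place, the formula falls out directly.
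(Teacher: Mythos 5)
Your proposal is correct and follows essentially the same route as the paper: quotient $\kbf\lc(e)$ by $N_e(\kbf\lc(e))$ (identified with the radical via Proposition \ref{prop:rad_is_ne}), apply Proposition \ref{prop:quotient_semisimple} and the character identities of Proposition \ref{prop:facts_on_char} to obtain $\sum_{V\in\irr_e}\chi^{V^{\#}}_{\kbf M}(m)\chi^{V}_{\kbf G_e}(g)$, and recognize this as the $(g,m)$-entry of ${}^tX(G_e)\cdot X_e(M)$. Your explicit justification of the invertibility of $X(G_e)$ and of the indexing via the Clifford--Munn--Ponizovskii bijection is a welcome extra degree of care that the paper leaves implicit, but it does not change the argument.
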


\begin{proof}
	First, we have, because of Proposition \ref{prop:facts_on_char}-2, we have:
	\[\chi_{\kbf M \otimes \kbf G_e^{op}}^{\kbf\lc(e)/N_e(\kbf\lc(e))} = 
	\chi_{\kbf M \otimes \kbf G_e^{op}}^{\kbf\lc(e)}(m, g) - \chi_{\kbf M \otimes \kbf G_e^{op}}^{N_e(\kbf\lc(e))}(m, g)\]
	Then, from Proposition \ref{prop:quotient_semisimple}, we know that:
	\[\begin{aligned}
		\chi_{\kbf M \otimes \kbf G_e^{op}}^{\kbf\lc(e)/N_e(\kbf\lc(e))}(m, g) 
		& = \chi_{\kbf M \otimes \kbf G_e^{op}}^{\bigoplus_{V \in \irr_e} V^{\#} \otimes V^*}\\
		& = \sum_{V\in\irr_e} \chi_{\kbf M \otimes \kbf G_e^{op}}^{V^{\#} \otimes V^*}\\
		& = \sum_{V\in\irr_e} \chi_{\kbf M}^{V^{\#}}(m)\chi_{\kbf G_e}^V(g)\\
	\end{aligned}
	\]
	This last sum is clearly the dot product between the column of $X(G_e)$ indexed by $g$ and the column of $X_e(M)$ indexed by $m$. That is, the coefficient in position $(g, m)$ of $\chi_{M-G_e}^{\lc(e)/\rad(\lc(e))}$ is equal to the coefficient in position $(g, m)$ of ${}^tX(G_e)\cdot X_e(M)$, which, together with Proposition \ref{prop:facts_on_char}(ii), proves the equality.
\end{proof}
	
 	\subsection{One step beyond: The Cartan matrix}
	We are, at last, in measure to state the formula from Thiéry for the Cartan Matrix. Without getting into the specific details, the Cartan matrix can be seen as measure of how "not semi-simple" the algebra of the monoid is. 
We use a non standard definition of the Cartan matrix, first given in \cite[Definition 2.6]{Thiery.CartanMatrixMonoid}. A formal proof that this is equivalent to the usual definition can be found in \cite[Corolary 7.28]{steinberg2016representation}.

\begin{dftn}
	Let $\{S_1, \dots, S_n\}$ be a set of representatives of the isomorphism classes of simple $\kbf M$-modules. The simple $\kbf M \otimes \kbf M^{op}$ modules are the $S_i \otimes S_j^*$ for all $i, j \in \intint{1, n}$. Denote by $[\kbf M : S_i \otimes S_j^*]$ the multiplicity of $S_i \otimes S_j^*$ as a composition factor of $\kbf M$.
	
	The Cartan matrix of $\kbf M$ is defined by:
	\[C(\kbf M) = ([\kbf M : S_i \otimes S_j^*])_{i, j}\] 
\end{dftn}

In other words, the Cartan matrix is a recording of the multiplicities of the composition factors of $\kbf M$ as a $\kbf M \otimes \kbf M^{op}$ module. But so is its character! The difference being that the character of $\kbf M$ as it is computed is expressed in the basis of the character equivalency classes of $M \times M^{op}$ while the Cartan matrix is expressed directly in the basis of the simple modules. Since the basis change between the two is precisely given by the character table and hence, we have the Thiéry's Formula for the Cartan matrix.

\begin{prop}\label{prop:formule_cartan}
	The Cartan matrix is given by the formula:
	\[C(\kbf M) = {}^tX_M\inv B X_M\inv\]
	where $B = (|\{s \in M \sepp msm'\}|)_{m,m' \in C_M}$
\end{prop}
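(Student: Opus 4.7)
The plan is to identify both sides of the claimed identity as two different matrix encodings of the same object, namely the multiset of composition factors of $\kbf M$ viewed as a $\kbf M \otimes \kbf M^{op}$-module.

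First, I would observe that the matrix $B$ is precisely the character table of the $\kbf M \otimes \kbf M^{op}$-module $\kbf M$, evaluated on representatives of generalized conjugacy classes of $M \times M^{op}$. Indeed, $M$ is a basis of $\kbf M$ and the action of $(m, m') \in M \times M^{op}$ sends the basis vector $s$ to $msm'$; the trace of this linear endomorphism counts exactly the basis vectors it fixes, so
\[B_{m, m'} = |\{s \in M \sepp msm' = s\}| = \chi_{\kbf M \otimes \kbf M^{op}}^{\kbf M}(m, m').\]
One should note here that the generalized conjugacy classes of $M \times M^{op}$ are precisely $C_M \times C_M$ (this follows from Proposition \ref{prop:char_equiv} and the factorization of Proposition \ref{prop:facts_on_char}(3)), so indexing by $C_M \times C_M$ is legitimate.

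Next, I would decompose this character using the composition factors of $\kbf M$. By Proposition \ref{prop:facts_on_char}(2) applied inductively along a composition series, together with the fact that the simple $\kbf M \otimes \kbf M^{op}$-modules are the $S_i \otimes S_j^*$, we obtain
\[\chi_{\kbf M \otimes \kbf M^{op}}^{\kbf M} = \sum_{i, j} [\kbf M : S_i \otimes S_j^*]\, \chi_{\kbf M \otimes \kbf M^{op}}^{S_i \otimes S_j^*}.\]
Applying Proposition \ref{prop:facts_on_char}(3) to factor the right-hand characters, then Proposition \ref{prop:facts_on_char}(1) to identify $\chi_{\kbf M^{op}}^{S_j^*}(m') = \chi_{\kbf M}^{S_j}(m')$, this becomes
\[B_{m, m'} = \sum_{i, j} C(\kbf M)_{i, j}\, \chi_{\kbf M}^{S_i}(m)\, \chi_{\kbf M}^{S_j}(m').\]
Reading the right-hand side as a triple matrix product with $(X_M)_{i, m} = \chi_{\kbf M}^{S_i}(m)$ yields $B = {}^t X_M \cdot C(\kbf M) \cdot X_M$, and inverting gives the stated formula.

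The main obstacle is justifying that $X_M$ is square and invertible, so that the inversion step is meaningful. Squareness follows by combining Proposition \ref{prop:char_equiv} (generalized conjugacy classes are indexed by pairs of an idempotent representing a regular $\jc$-class and a conjugacy class of $G_e$) with Theorem \ref{thm:CMP} (simple $\kbf M$-modules are indexed by pairs of such an idempotent and a simple $\kbf G_e$-module), using that over our field $\kbf$ each $G_e$ has as many simple modules as conjugacy classes. Invertibility then reduces to the linear independence of irreducible characters (Proposition \ref{prop:char_libres}): since any $\kbf$-linear relation among the rows of $X_M$ would, by constancy of characters on generalized conjugacy classes, extend to a relation among the $\chi_{\kbf M}^{S_i}$ as functions on all of $M$, such a relation must be trivial.
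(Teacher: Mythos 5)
Your proof is correct and takes essentially the same route as the paper, which only argues informally that both $B$ (the bicharacter of $\kbf M$ as a $\kbf M \otimes \kbf M^{op}$-module) and the Cartan matrix record the composition factors of $\kbf M$, expressed in two bases related by the character table. Your version simply fills in the details the paper leaves implicit, notably the identification of $B$ as a trace of the combinatorial action and the squareness and invertibility of $X_M$.
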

	
	\section{Some explicit computations}\label{sec:Algos}

	\subsection{Computational hypotheses}\label{sub:hypo}
	In this section we discuss the computational hypotheses necessary for the algorithms in the next section. 
This section is based on the work \cite{east2019computing} in which East, Egry-Nagy, Mitchell and Péresse provide efficient algorithms for all basic computational questions on finite semigroups (which include monoids). Although we limit our scope to the case of transformation monoids, methods described \cite{east2019computing} allow the algorithms described below to be applied to other interesting classes of monoids. Moreover, they can theoretically be applied to any finite monoid using a Cayley embedding in a full transformation monoid. In general however, this is very inefficient and not feasible in practice.

Following the authors of \cite{east2019computing}, we make the following fundamental assumptions that we can compute:  
\begin{itemize}
	\item Assumption I : a product of two elements of the monoid.
	\item Assumption II : the image and kernel of a transformation (note that we do not explicitly use this assumption, but that it is necessary for the algorithms of \cite{east2019computing} that we do use).
	\item Assumption III : Green pairs.
	\item Assumption IV : Given $h \in {}_M\stab(H)$ compute the corresponding element in $\Gamma(H)$ (understood as a permutation group of the image common to all elements of $H$ as seen in Example \ref{ex:schu_as_symm}), and similarly on the right.
\end{itemize}

Not only do we directly need to be able to do these computations for our own algorithm, but they are also prerequisite for the algorithms from \cite{east2019computing}. As such, we refer to the top of Section 5.2 of \cite{east2019computing} on how to realize these computations in the case of transformation monoids.

We, again, refer to \cite{east2019computing} for the specific algorithms meeting our computational prerequisites.
\begin{itemize}
	\item Computing the \schu groups: \cite[Algorithm 4]{east2019computing}
	\item Checking membership of an element in a Green class: \cite[Algorithms 7 \& 8]{east2019computing}.
	\item Finding idempotents: \cite[Algorithm 10]{east2019computing}. This algorithm also allows for finding the regular $\jc$-classes.
	\item Decomposing the monoid in $\rc, \lc$ and $\jc$-classes : \cite[Algorithm 11]{east2019computing} and its discussion. Note that by storing this decomposition, we can, given an element of the monoid, find the classes that contain it.
	\item Obtaining a representative of a Green class: this is given by the data structure representing the Green classes described at the top of \cite[Section 5.4]{east2019computing}.
\end{itemize}

Finally, we require the following points that, although they are not described in \cite{east2019computing}, are easily obtained from it.

\begin{itemize}
	\item Computing a set $C_M$ of character equivalency representatives: given Proposition \ref{prop:char_equiv}, this can be done in four steps:
	\begin{enumerate}
		\item compute a set $\mathcal{E}$ of idempotent representatives of the regular $\jc$-classes,
		\item compute $\Gamma(\hc(e))$ for each $e \in \mathcal{E}$,
		\item compute a set $C_e$ of representatives of the conjugacy classes of $\Gamma(\hc(e))$ for each $e \in \mathcal{E}$, using for instance the procedure described in \cite{hulpke2000conjugacy},
		\item for each $e \in \mathcal{E}$ and $c \in C_e$ compute the corresponding element of $\hc(e)$ as in Example \ref{ex:iso_idem}. 
	\end{enumerate}
	\item Computing $\tau_a$ as in Proposition \ref{prop:bij_cano_conj} :  given $g \in \Gamma(H)$, $\tau_a(g)$ is simply, seen as an element of $\symm(\ker a)$ :
	\[a\inv\{i\} \mapsto (g\cdot a)\inv\{g\cdot a(i)\},\]
	which can be computed in $O(n)$. Note that this is a special case of the application described in \cite[Proposition 3.11 (a)]{east2019computing}
	\item Testing that two elements $g, g'$ in $\Gamma'(\hc(a))$ are conjugated : $\Gamma'(H)$ is represented as a subgroup of $\symm(\ker a)$ and known procedures, such as the one described in \cite{butler1994inductive}, can be used.
	\item Computing the cardinality of a conjugacy class of a \schu group: for instance, the computer algebra system GAP uses the method described in \cite{hulpke2000conjugacy}.
\end{itemize}
	
	\subsection{Combinatorial bicharacter computing: 3 applications.}\label{sub:algos}
	We are now ready to present the algorithm for fixed-point counting, keeping in mind that we want first to use the formula from Section \ref{sec:mod}.\ref{sub:char} to compute the character table of the monoid and further to compute the Cartan matrix of the monoid. In the cases we are interested in, we use the formalism of character computing, since, as stated in the Lemma below, computing the characters of so called \emph{combinatorial modules} is actually counting fixed points.

\begin{lemme}\label{lem:fixed_is_char}
	Let $M, M'$ be two finite monoids and $(V, B)$ a finite dimensional $\kbf M-\module-\kbf M'$ space equipped with a basis $B$. If the actions of $M, M'$ on $(V, B)$ are \emph{combinatorial}, meaning for any $(m, b, m') \in M \times B \times M', mbm' \in (B \cup \{0\})$, then:
	\[\chi_{\kbf M \otimes \kbf M'^{op}}^V = |\{b \in B \sepp mbm' = b\}|.\]
\end{lemme}

\begin{proof}
	In the basis $B$, the matrix of the linear map $x \mapsto mxm'$ is a $\{0,1\}$-matrix, with for every $b \in B$ exactly one 1 in the $b$-th column, that 1 being on the $b$-th row if $mbm'=b$. Thus, the trace counts the number of fixed points.
\end{proof}

Note that we have already defined a structure of combinatorial $\kbf M-\module-\kbf G_e$ on $(\kbf\lc(e), \lc(e))$ for any idempotent $e$.
In the same way, $\kbf J$ for a $\jc$-class $J$, can be equipped with a structure of $\kbf M-\module-\kbf M$ by setting for every $(m, j) \in M\times J$:
\[m\cdot j = \left\{\begin{aligned}
						& mj \textrm{ if } mj \in J\\
						& 0 \textrm{ otherwise}
					\end{aligned}\right.
\textrm{ and }
j\cdot m = \left\{\begin{aligned}
						& jm \textrm{ if } jm \in J\\
						& 0 \textrm{ otherwise}
				  \end{aligned}\right..\]
As before, this is well defined: firstly because the actions on the left and on the right commute (because the monoid's law is associative by assumption) and secondly because either $m \leql m'$ or $m \leqr m'$ imply $m \leqj m'$ so, as in Section \ref{sec:mod}.\ref{sub:modules}, if $ml$ or $lm$ has "fallen to 0", it can't "climb back up" to $J$. 

This structure makes $(\kbf J, J)$ into a combinatorial module and we may apply our fixed points counting methods to compute its character.

\begin{algo}[Computing the bicharacter of a $\jc$-class]\label{algo:j_class}
	Keeping the assumptions and notations of the previous Paragraph \ref{sub:hypo}, we get from Corollary \ref{cor:pt_fix_counting} an algorithm to compute the bicharacter of $\kbf J$ as a $\kbf M-\module-\kbf M$:
%	\begin{enumerate}
%		\item a set $C_M$ of character equivalency classes representatives is given,
%		\item the decompositions of $J$ as an union of $\rc$-classes and as an union of $\lc$-classes are given,
%		\item an $\hc$-class $H \subset J$ and an element $a \in H$ are given,
%		\item for each $\rc$-class $R \subset J$, we can compute a left Green pair $(\lambda_R, \lambda'_R)$ for $(\rc(H), R)$ and for each $\lc$-class $L \subset J$, we can compute a right Green pair $(\rho_R, \rho'_R)$ for $(\lc(H), L)$,
%		\item given an element and a Green class, we can decide if this element belongs to the class,
%		\item we can compute the product of two elements in $M$,
%		\item we can compute $h\mul{H} \in \Gamma(H)$ given $h \in {}_M\stab(H)$ and $\mul{H}k$ given $k\in \stab_M(H) \in \Gamma'(H)$,
%		\item we can identify the conjugacy class of an element in $\Gamma'(H)$, compute the cardinality of its centralizer and we can compute a set $C$ of conjugacy classes representatives of $\Gamma'(H)$,
%		\item we can compute $\tau_a$ defined in Proposition \ref{prop:bij_cano_conj},
%	\end{enumerate}
	\begin{itemize}
		\item Input : A $\jc$-class $J$, a set of representatives of the character equivalency classes $C_M$.
		\item Output : A matrix $(|\{m \in J \sepp hmk=m\}|)_{(h, k) \in C_M^2}$
	\end{itemize}
	\begin{enumerate}
		\item Preparations:
		\begin{enumerate}
			\item Choose $a \in J$ and define $H = \hc(a)$. 
			\item Compute Green pairs $(\lambda_R, \lambda_R')$ (respectively $(\rho_L, \rho_L')$) for $(\rc(a), R)$ (resp.  $(\lc(a), L)$) for all $\rc$-class $R \subset J$ (resp. $\lc$-class $L \subset J$).
			\item Compute the set $C$ of conjugacy classes of $\Gamma'(H)$.
		\end{enumerate}
		\item For each character equivalency representative $h \in C_M$, initialize $r_J(h)$ and $l_J(h)$ to both be $(0)_{\bar{g}\in C}$.
		\begin{enumerate}
			\item For each $\rc$-class $R \subset J$, test if $h\lambda_Ra\in R$. If so, denoting by $\bar{g} $ the conjugacy class of $\tau_a((\lambda_R'h\lambda_R)\mul{H})$ in $\Gamma'(H)$, increment $r_J(h)$ by $|C_{\Gamma'(H)}(g)|$ at position $\bar{g}$.
			\item For each $\lc$-class $L \subset J$, test if $a\rho_Rh\in L$. If so, denoting by $\bar{g} $ the conjugacy class of $\mul{H}(\rho_L'h\rho_L)$ in $\Gamma'(H)$, increment $r_J(h)$ by $1$ at position $\bar{g}$.
		\end{enumerate}
		\item Compute the matrix $\chi = (r_J(h)\cdot l_J(k))_{(h, k) \in C_M^2}$ using the previously computed vectors and return $\chi$.
	\end{enumerate}
\end{algo}

\begin{lined}
	\begin{ex}\label{ex:aperiodic_bichar}
		An \emph{aperiodic} monoid is a monoid where all $\hc$-classes are singletons. Let us apply the algorithm we just described in the case of a $\jc$-class $J$ with trivial $\hc$-classes.
		Several simplifications occur : first, we don't need to check for the conjugacy class, as there is only one. Secondly, the conjugacy class has cardinality one.
		Consider the vectors $r_J = (|S_{\rc}(h)|)_{h \in C_M}$ and $r_J = (|S_{\lc}(h)|)_{h \in C_M}$ with $S_{\lc}(h)$ and $S_{\rc}(h)$ defined as in Corollary \ref{cor:pt_fix_counting}. The bicharacter is simply the matrix product of $r_j^T$ with $l_J$. The particular case of this algorithm for aperiodic monoid is described in \cite[Section .1]{Thiery.CartanMatrixMonoid}
	\end{ex}
\end{lined}

\begin{algo}[Computing the bicharacter of $\kbf M$]\label{algo:M_char}
	If we consider $(\kbf M, M)$ as a combinatorial $\kbf M-\module-\kbf M$, we immediately have that:
	\[\chi_{\kbf M \otimes \kbf M'^{op}}^{\kbf M} = \sum_{J \in \jc}\chi_{\kbf M \otimes \kbf M'^{op}}^{\kbf J}\]
	and we can therefore compute the bicharacter of the whole monoid $M$: we first compute a set $C_M$ of representatives of the character equivalency classes and we the iterate Algorithm \ref{algo:j_class} over all $\jc$-classes and sum the results. 
%	However, we need to slightly change our assumptions on what we are able to compute compared to the previous example:
%	\begin{enumerate}
%		\item the monoid $M$ is given,
%		\item we can compute a set $C_M$ of character equivalency classes representatives,
%		\item we can compute the decomposition of $M$ in $\jc$-classes and the decomposition of each $\jc$-class $J$ as an union of $\rc$-classes and as an union of $\lc$-classes,
%		\item we can compute an $\hc$-class $H \subset J$ and an element $a \in H$ given a $\jc$-class $J$,
%		\item we keep assumptions 4 to 9 of Algorithm \ref{algo:j_class}
%	\end{enumerate}

%	\begin{itemize}
%	\item Input : The monoid $M$.
%	\item Output : A matrix $(|\{m \in M \sepp hmk=m\}|)_{(h, k) \in C_M^2}$.
%	\end{itemize}
%	\begin{enumerate}
%		\item Initialize $\chi = (0)_{(h, k) \in C_M^2}$
%		\item For each $\jc$-class $J$:
%		\begin{enumerate}
%			\item Compute $\chi_J$, the result of Algorithm \ref{algo:j_class} applied to $J$.
%			\item $\chi$ becomes $\chi + \chi_J$
%		\end{enumerate}
%		\item Return $\chi$.
%	\end{enumerate}
\end{algo}

The final useful example is the case of counting fixed points in a single regular $\lc$-class, for the purpose of computing the character table of the monoid.

\begin{algo}[Computing the bicharacter of an $\lc$-class]\label{algo:l_class}
	Let $e$ be an idempotent en let $L = \lc(e)$. In this example $\kbf L$ is still a combinatorial module, but it has the particularity, compared with the other two examples, that the monoids on the left and right are not the same. However, as the maximal subgroup at $e$, $G_e$, is a subsemigroup of $M$, the same results apply at no extra costs. 
%	Still, we have to, again, make some adjustments to our suppositions:
%	\begin{enumerate}
%		\item a set $C_M$ of character equivalency classes representatives is given,
%		\item the decompositions of $L$ as an union of $\hc$-classes is given,
%		\item we can compute $H_e = \hc(e),$
%		\item for each $\hc$-class $H \subset J$, we can compute a left Green pair $(\lambda_H, \lambda'_H)$ with respect to $(\rc(H_e), \rc(H))$,
%		\item given an element and a Green class, we can decide if this element belongs to the class,
%		\item we can compute the product of two elements in $M$,
%		\item we can identify the conjugacy class of an element in $G_e$, compute the cardinality of its centralizer and we can compute a set $C$ of conjugacy classes representatives of $G_e$. \todo{un exemple pour montrer comment tout collapse en un truc plus simple dans le cas régulier}
%	\end{enumerate}
	
	We can simply adapt the algorithm of Algorithm \ref{algo:j_class}. Since an element of $C_{G_e}$ acts "as itself" on the right, we don't need to keep track of the action of the right with a vector $r_L$ as we did previously.
	\begin{enumerate}
		\item Initialize $\chi$ to $(0)_{(h,k)\in C_M\times C}$
		\item For each $h \in C_M$, for each $\hc$-class $H$, test if $h\lambda_Ha\in H$. If so, denoting by $k$ the conjugacy class of $\lambda_H'h\lambda_H$ in $G_e$, increment $\chi$ by $|C_{G_e}(h)|$ at position $(h, k)$.
		\item Return $\chi$
	\end{enumerate}
\end{algo}

	\subsection{Computing $N_e(\kbf L)$}
	We are now almost in position to use the formula of Proposition \ref{prop:formula_char}: the character tables of the groups are supposed to be given, as we dispose of efficient group algorithms in the literature to compute them, from Algorithm \ref{algo:l_class} we now know how the efficiently compute the bicharacter of $\kbf \lc(e)$ as a $\kbf M \otimes \kbf G_e^{op}$-module for some idempotent $e \in M$. It remains to compute the bicharacter of $N_e(\kbf \lc(e))$ as a $\kbf M \otimes \kbf G_e^{op}$-module, which we discuss now.

Let $L = \lc(e)$. Recall that, by definition, $N_e(\kbf L) = \{x \in \kbf L \sepp  eMx = 0\}$.
Taking $L$ as a basis for $\kbf L$, we can form a matrix with rows indexed by $M \times L$ and columns indexed by $L$, with the coefficient at $((m, l), l') = 1$ if $eml = l'$ and 0 otherwise. Computing the kernel of this matrix yields a basis for $N_e(\kbf L)$ but is extremely inefficient as the number of rows is many times the cardinality of the monoïd.

Notice first that for any $m \in M$, $em \leqr e$ so we can consider only the elements of $M$ that are $\rc$-smaller than $e$. 
Conversely, recall that the structure of $\kbf M$-module on $M$ is defined by $m\cdot l = ml$ if $ml \in L$ and $0$ otherwise and that this latter case happens if $ml  \leql l$. Since if $m \leql l$ implies $ml \leql l$ we have that the $(m, l)$-th row of the matrix is null and that we may omit it. This shows that we need only to consider the element of $M$ that are not $\lc$-below $e$. Together with the previous point, this means that the similarly defined matrix but whose rows are only indexed by $\rc(e) \times L$ has the same kernel. This is good news, as we may now exploit the structure of the $\jc$-class given by Green's Lemma to further reduce the dimension of this matrix. Indeed, another consequence of Green's Lemma is the so called "Location Theorem" from Clifford and Miller. A proof can be found in \cite[Theorem 1.11]{Pin:Automata}

\begin{lemme}[Location Theorem]
	Let $r, l$ be two elements in the same $\jc$-class.
	We have:
	\[rl = \left\{\begin{aligned}
	& \gamma \in \rc(r) \cap \lc(l) \textrm{ if } \lc(r)\cap\rc(l) \textrm{ contains an idempotent},\\
	& \gamma <_{\jc} l, r \textrm{ otherwise}.
	\end{aligned}\right.\]
\end{lemme}

\begin{figure}[h!]\label{fig:location}
	\centering
	\begin{tikzpicture}[scale = 0.9]
	\tikzstyle{fleche}=[->,>=latex,rounded corners=4pt]
	\node at (0,0){$l$};
	\node at (0,3){$rl$};
	\node at (5,0){$e$};
	\node at (5,3){$r$};
	\draw (-.5,3.5) -- (-.5, -.5);
	\draw (.5,3.5) -- (.5, -.5);
	\draw (5.5,3.5) -- (5.5, -.5);
	\draw (4.5,3.5) -- (4.5, -.5);
	\draw (-.5,3.5) -- (5.5, 3.5);
	\draw (-.5,2.5) -- (5.5, 2.5);
	\draw (-.5,.5) -- (5.5, .5);
	\draw (-.5,-.5) -- (5.5, -.5);
	
	\draw[->, >=latex, bend left = 45] (-.3, 0) to (-.3, 3);
	\node at (-1.3, 1.5){$=$};
	\draw[->, >=latex, bend left] (5,2.7) to (0.3,0);
	\node at (3, 1){$\times$};
	
	\node at (-1.2, 3){$\rc(r)$};
	\node at (-1.2, 0){$\rc(l)$};
	\node at (0, 4){$\lc(l)$};
	\node at (5, 4){$\lc(r)$};
	\node at (6, 0){};
	
	\end{tikzpicture}
	\caption{Location Theorem\\ {\small Since there is an idempotent $e$ in $\lc(r)\cap \rc(l)$, $rl$ stays in the same $\jc$-class, in $\lc(l) \cap \rc(r)$.}}
\end{figure}

\begin{lemme}
	Let $e \in M$ be an idempotent, $R = \rc(e)$ its $\rc$-class and $R'$ another $\rc$-class of $\jc(e)$. Let $(\lambda, \lambda')$ be a left Green pair for $(L, L')$. Then $(\lambda e, e\lambda')$ is a left Green pair for $(R, R')$.
	
	Similarly, if $L = \lc(e)$, $L'$ is a $\lc$-class of $\jc(e)$ and $(\rho, \rho')$ is a right Green pair for $(L, L')$, then $(e\rho, \rho'e)$ is a right Green pair for $(L, L')$
\end{lemme}

\begin{proof}
	Let $g$ be any element of $\hc(e)$ and $g' = \lambda g$. Since $e$ is idempotent, $\hc(e)$ is a group with identity $e$ so we have $e\lambda'\lambda e g = e \lambda'\lambda g = eg = g$ and $\lambda ee\lambda' g' = \lambda eg = \lambda g = g'$ which, from Green's Lemma, make $(\lambda e, e\lambda')$ a left Green pair for $(L, L')$. A similar argument applies for the second part of the proposition.
\end{proof}

\begin{rmk}
	This lemma means that for a regular $\jc$-class $J$ and for any two $\lc$-class (or $\rc$-class) it contains, we may choose a corresponding Green pair among the elements of those two classes.
\end{rmk}

\begin{prop}\label{prop:equations}
	Let $e \in M$ be an idempotent and $H = \hc(e), L = \lc(e), R = \rc(e)$ and $J = \jc(e)$. 
	For each $\rc$-class $R' \subset J$, we choose a left Green pair $(l, l') \in J^2$. We denote by $\mathfrak{L}$ the set of all $l$ for the chosen left Green pairs. We define $\mathfrak{R}$ similarly.
	Then $N_e$ is the set of solutions of :
	\[\forall r \in \mathfrak{R}, \forall g \in H,\quad \sum_{l \in \mathfrak{L}} \ind_H(rl)x_{l(rl)\inv g} = 0\]
\end{prop}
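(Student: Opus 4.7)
The plan is to unfold $N_e(\kbf L) = \{v \in \kbf L : eMv = 0\}$ and compress this system of equations into the finite one of the proposition via three successive reductions.

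First, I reduce the quantifier ``for every $m \in M$'' to ``for every $r \in \mathfrak{R}$''. For $m \in M$ with $em$ strictly $\jc$-below $e$, we have $(em)\cdot l = 0$ in $\kbf L$ for every $l \in L$, so only the $r \in eM \cap J = R$ produce nontrivial equations; conversely every such $r$ arises, since $e$ is the identity on its own $\rc$-class ($er = eev = r$ when $r = ev$). From the preceding lemma we may choose each $r_{L'} \in \mathfrak{R}$ in the $\hc$-class $R \cap L'$, and the Location Theorem applied with the idempotent $e \in L \cap R = H$ shows that left multiplication by $G_e$ sends $R \cap L'$ into itself; the resulting morphism $G_e \to \Gamma(R \cap L')$ is an isomorphism, by constancy of $\hc$-class cardinalities across $J$. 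Hence every $r \in R$ is uniquely $r = g \cdot r_{L'}$ with $g \in G_e$ and $L' = \lc(r)$. The Location Theorem also forces $r_{L'} l \in H$ or $0$, so the support of $r_{L'}v$ lies in $H$; since left multiplication by $g$ is invertible on $\kbf H$, we get $rv = g(r_{L'}v) = 0$ if and only if $r_{L'}v = 0$.

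Symmetrically, $G_e$ acts freely and transitively on each $R' \cap L$ by right multiplication, so every $l \in L$ is uniquely $l_{R'} \cdot \gamma$ with $l_{R'} \in \mathfrak{L}$ and $\gamma \in G_e$. For $r \in \mathfrak{R}$, expanding $v$ in this parameterization and applying the Location Theorem yields
\[
rv = \sum_{l \in \mathfrak{L}} \ind_H(rl) \sum_{\gamma \in H} x_{l\gamma}\,(rl)\gamma,
\]
where $\ind_H(rl)$ records whether $L' \cap R'$ contains an idempotent, the only case in which Location keeps $rl$ in $H$. Performing the substitution $\gamma \mapsto (rl)\inv g$ inside the group $H = G_e$ in each outer term, and reading off coefficients of the basis $H$ of $\kbf H$, yields exactly the announced equations.

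The main obstacle is not any single deep idea but the careful bookkeeping through three layered parameterizations (of $R$ by $\mathfrak{R} \times G_e$, of $L$ by $\mathfrak{L} \times G_e$, and of $H$ by its own group law); beyond these, the only ingredients needed are the Location Theorem and the invertibility afforded by $G_e$ being a group, both available from the preceding subsections.
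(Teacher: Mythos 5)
Your proof is correct and follows essentially the same route as the paper's: both parameterize $R$ as $G_e\cdot\mathfrak{R}$ and $L$ as $\mathfrak{L}\cdot G_e$, invoke the Location Theorem to produce the indicator $\ind_H(rl)$, and read off the coefficients of the basis $H$ of $\kbf H$. The only (cosmetic) differences are that you discard the redundant group factor up front, via invertibility of $g \in G_e$ on $\kbf H$, whereas the paper writes down all coordinate functions $f_{k,gr}$ and removes the duplicate equations at the end, and that you make explicit the reduction from all of $M$ to $R = eM \cap J$, a detail the paper leaves implicit.
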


\begin{proof}
	Consider an element $a \in R$. $a$ can be written in a unique way as $gr$, with $g \in G$ and $r \in \mathfrak{R}$ corresponding to $\lc(a)$. Similarly, an element $b$ in $L$ as a unique decomposition as $l\gamma, l \in \mathfrak{L}, \gamma \in H$.
	For an element $x \in \kbf L$ we note:
	\[x = \sum_{l \in \mathfrak{L}, \gamma \in H}x_{l\gamma}l\gamma\]
	its decomposition over the basis $L$.
	
	We want to find the equations that describe $\ker(gr\mul{L})$ (where $gr\mul{L}$ is the linear map on $\kbf L$ obtained by extending the monoid's multiplication by linearity). From the Location Theorem, we get that $\im(gr\mul{L}) \subset \kbf H$.
	For $k \in H$, denote by $f_{k, gr}$ the $k$-th coordinate function of $gr\mul{L}$.
	Because $gr\mul{L}$ acts combinatorially on $\kbf L$, we have :
	\[f_{k, gr}(x) = \sum_{l \in \mathfrak{L}, \gamma \in H} \ind_{\{k\}}(grl\gamma)x_{l \gamma}\]
	
	Note that $x_{l \gamma}$ appears in the sum if and only if $grl\gamma = k$. From the Location Theorem, and because we choose $l \in L, r \in R$, we have $grl\gamma = k \textrm{ if and only if } rl \in H \textrm{ and } \gamma = (rl)\inv g\inv k$
	and thus the equation becomes:
	\[f_{k, gr}(x) = \sum_{l \in \mathfrak{L}} \ind_H(rl)x_{l(rl)\inv g\inv k}.\]
	For $x$ to be in $\ker(gr\mul{L})$, $x$ must cancel simultaneously $f_{k, gr}$ for all $k\in H$. We now have a set of equations for $ker(gr\mul{L})$, and we can deduce that the set of equations
	\[\forall r \in \mathfrak{R}, \forall g,k \in H, \quad f_{k, gr}(x) = \sum_{l \in \mathfrak{L}, \gamma \in H} \ind_H(rl)x_{l (rl)\inv g\inv k} = 0\]
	describes $N_e(\kbf L)$.
	However, the equation system is redundant as the equation $f_{k, gr}(x) = 0$ is the same for all pairs $(g, gk')$ with $k' \in H$. Removing the duplicates equation gives the system announced in the proposition.
\end{proof}

\begin{lined}
	\begin{ex}[$N_e$ in the case of an aperiodic monoid]
		As in Example \ref{ex:aperiodic_bichar}, let us consider the case of a $\jc$-class with trivial $\hc$-classes. In this case, we have $L = \mathfrak{L}, R = \mathfrak{R}$ and $H = \{1_H\}$, so the equations become:
		\[\forall r \in R, \quad \sum_{l\in M}\ind_H(rl)x_l.\]
		Again from the Location Theorem, we have that $\ind_H(rl) = 1$ if and only if there is an idempotent in $\lc(r)\cap\rc(l)$. So if we form a matrix $A$ with rows indexed by $L$ and columns indexed by $R$, and with coefficients 1 at $(\lc(r),\rc(l))$ if $\lc(r)\cap \rc(l)$ contains an idempotent and 0 otherwise, the above equations becomes :
		\[(x_l)_{l\in L}^T A = 0,\]
		that is, in the case of an $\hc$-trivial $\jc$-class, $N_e(\kbf L)$ is the left kernel of the eggbox picture seen as a $\{0, 1\}$-matrix. 
	\end{ex}
\end{lined}

Note that given this set of equations, we can compute the character $\chi_{\kbf M \otimes \kbf G_e^{op}}^{N_e(\kbf\lc(e))}$ from the formula in Proposition \ref{prop:formula_char} using classical linear algebra algorithms to find a basis of $N_e(\kbf\lc(e))$ and then computing the value of the character at any $(m, g) \in C_M \times C_{G_e}$ by iterating over the basis vectors, applying $(m, g)$ as a linear map and computing the relevant coefficient in the image vector.

	\section{Performances, computational complexity and benchmarks}\label{sec:perf}
	In this section, we discuss performances in terms of complexity whenever we can, and in terms of benchmarks for timings and memory usage. In the next paragraph, we discuss the challenges in measuring performances and the subsequent choices made. Given these considerations, in the three paragraphs following, we discuss the computationnal performances of our three main objects of interest: the combinatorial bicharacter (\emph{i.e.} fixed-point counting), the character table and finally the Cartan matrix.
	
	\subsection{Discussion and Challenges}\label{sub:challenges}

	\subsection{Fixed point counting}\label{sub:perf_count}
	In the case of Algorithm \ref{algo:j_class}, we can give some analysis of the time complexity in terms of the Green structure of the particular $\jc$-class Algorithm \ref{algo:j_class} is applied to.

\begin{prop}
	Consider a $\jc$-class $J$ containing $n_L$ $\lc$-classes, $n_R$ $\rc$-classes, containing an $\hc$-class $H$ with $n_C$ conjugacy classes in $\Gamma'(H)$, and let $C_M$ be a set representatives of the character equivalence classes, as before. Then, the Algorithm \ref{algo:j_class} does:
	\begin{itemize}
		\item $n_C$ cardinality computations of conjugacy classes of $\Gamma'(H)$ (assuming memoization to be able to do a lookup in step 2-a of Algorithm \ref{algo:j_class}, instead of computing it on the fly),
		\item $O(|C_M|(n_L + n_R))$ monoid multiplications, Green class membership tests and conjugacy class of $\Gamma'(H)$ membership tests,
		\item $O(|C_M|n_R)$ computations of $\tau_a$,
		\item $O(|C_M|n_L)$ conjugacy class of $\Gamma'(H)$ cardinality lookups,
		\item $n_C|C_M|^2$ integer multiplications.
	\end{itemize}
\end{prop}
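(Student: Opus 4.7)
The plan is to perform a line-by-line audit of Algorithm \ref{algo:j_class}, tallying operations of each declared type separately. The algorithm decomposes into three phases: a preparation phase (step 1), a double loop over $C_M$ and the Green classes of $J$ (step 2), and a final assembly of the matrix $\chi$ (step 3). I would handle each phase in isolation and then sum the contributions.

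For step 1, the choice of $a$ and the computation of all left/right Green pairs are subsumed by the computational assumptions of Paragraph \ref{sub:hypo}, so I would not charge them here. The only cost I would attribute is step 1-c: enumerating the $n_C$ conjugacy classes of $\Gamma'(H)$ and storing each centralizer cardinality in a table for later constant-time lookup. This accounts exactly for the $n_C$ cardinality computations.

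For step 2, I would treat substeps 2-a and 2-b symmetrically. In 2-a, for each $h \in C_M$ and each of the $n_R$ right classes $R\subset J$, the algorithm performs a bounded number of monoid multiplications (to form $h\lambda_R a$ and, conditionally on the membership test succeeding, $\lambda_R' h \lambda_R$), one Green class membership test, one evaluation of $\tau_a$, one conjugacy membership test in $\Gamma'(H)$, and one centralizer cardinality lookup. Summed over $h$ and $R$, this gives $O(|C_M| n_R)$ operations of each of these types. Substep 2-b is handled in the same way over the $n_L$ left classes, yielding $O(|C_M| n_L)$ monoid multiplications, Green membership tests and conjugacy membership tests, but no $\tau_a$ evaluations. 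Adding the two substeps gives the lumped bound $O(|C_M|(n_L+n_R))$, while the $\tau_a$ evaluations and the centralizer cardinality lookups remain confined to 2-a (resp.\ 2-b), giving the bounds announced in the statement.

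For step 3, the matrix $\chi$ has $|C_M|^2$ entries, each computed as the dot product of $r_J(h)$ and $l_J(k)$, two vectors of length $n_C$; this yields exactly the $n_C |C_M|^2$ integer multiplications. There is no real obstacle: the proof is pure bookkeeping. The two points that deserve attention are (i) precomputing the shared products $\lambda_R a$ and $a \rho_L$ once per class so that each iteration of the inner loops costs only $O(1)$ monoid multiplications, and (ii) invoking the memoization table built in step 1-c so that no cardinality \emph{computation} (as opposed to lookup) sneaks into the inner loop, which is what keeps the $n_C$ bound tight rather than being multiplied by $|C_M|$.
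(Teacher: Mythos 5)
Your proof is correct and takes essentially the same approach as the paper, whose entire proof is the single sentence that the bounds ``simply result from an inspection of Algorithm \ref{algo:j_class}''; your line-by-line tally is just that inspection made explicit. One small internal wobble: you first (correctly) locate the centralizer-cardinality lookup in step 2-a, where $r_J(h)$ is incremented by $|C_{\Gamma'(H)}(g)|$ over the $n_R$ $\rc$-classes, but then say these lookups are ``confined to 2-b''; the count should read $O(|C_M|\,n_R)$ rather than $O(|C_M|\,n_L)$, an index swap that appears to originate in the proposition's own statement and does not affect the rest of your accounting.
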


\begin{proof}
	This simply results from an inspection of Algorithm \ref{algo:j_class}.
\end{proof}

As explained before, we cannot meaningfully extend this analysis to Algorithm \ref{algo:j_class}. We can get a similar result by inspection of Algorithm \ref{algo:l_class}: it is essentially the same algorithm, except that it is applied on only one $\lc$-class and that we don't need the final integer multiplications ate the end.

\begin{figure}[h!]
	\centering
	
	\makebox[\textwidth][c]{
		\includegraphics[width = 0.6\textwidth]{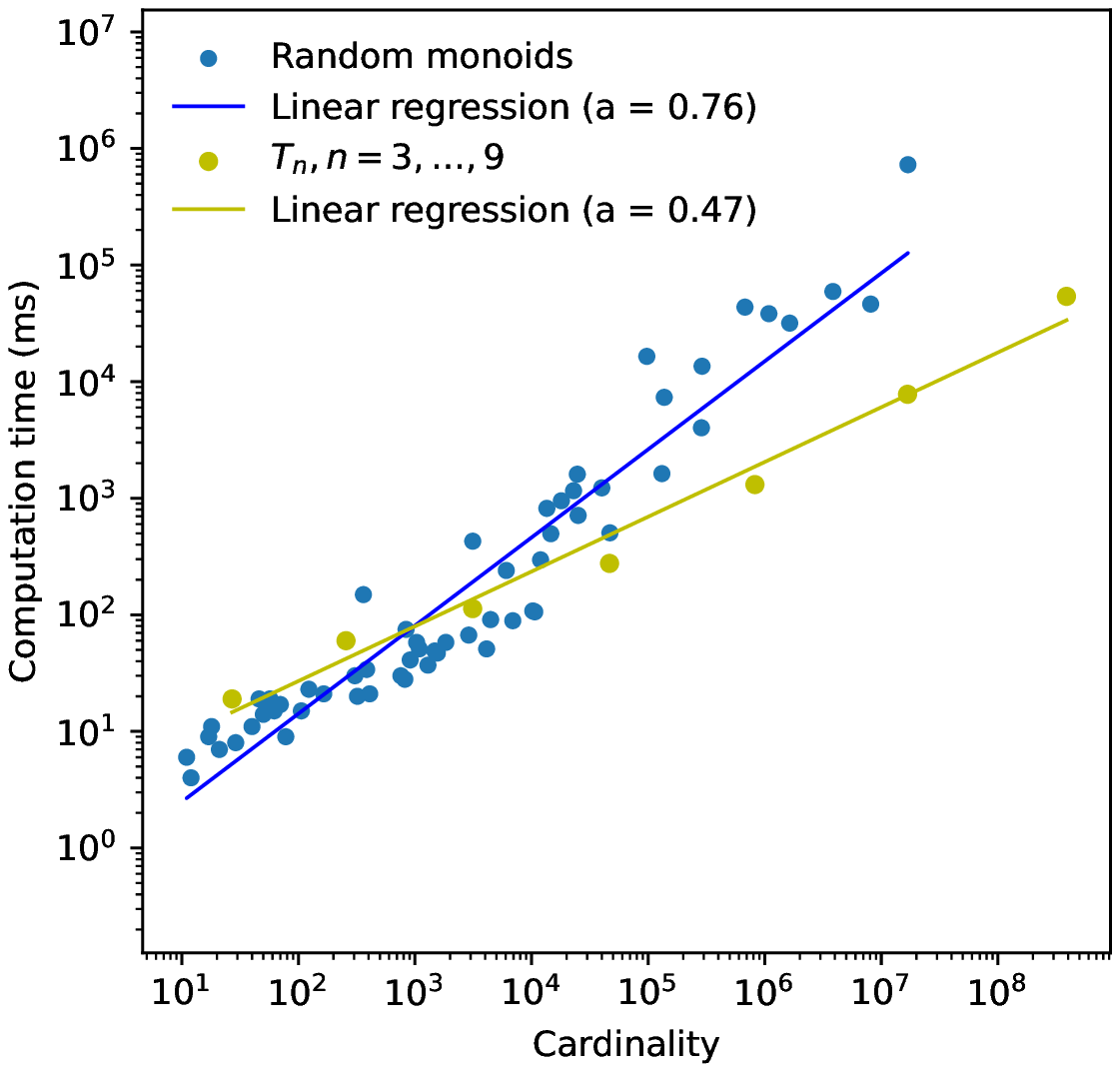}
		\hfill
		\includegraphics[width = 0.6\textwidth]{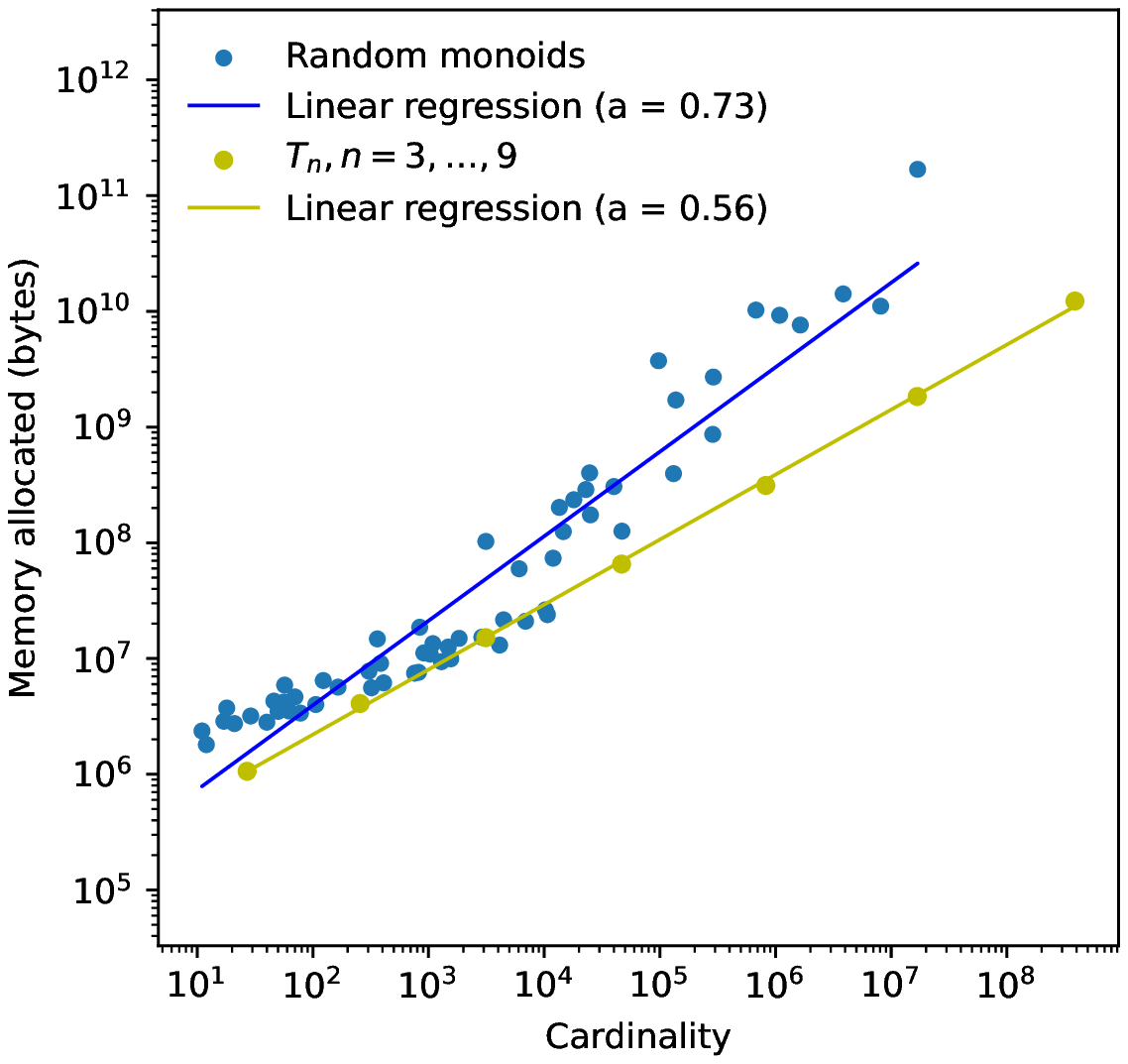}
	}
	
	\caption{Computation time and memory usage of Algorithm \ref{algo:M_char}.
		\\
		The blue points correspond to the random monoids, the yellow ones to $T_n$ for $n \in \intint{3, 9}$. The yellow points are excluded of the linear regression as the algorithm is "anormaly" efficient on them: we mesure a complexity on the full transformation monoids of approximately $O(\sqrt{|T_n|})$ while the measured complexity on random monoids is about $O(n^{0.76})$ in time and memory.}
	\label{fig:plot_bichar}
\end{figure}

Note that we do not provide a cumulative formula for the complexity of Algorithm \ref{algo:j_class} as for instance the complexity of a conjugacy class membership test heavily depends on the algorithm used by the computer algebra system, that can itself vary depending on the characteristics of the \schu groups. This makes the task of providing a meaningful evaluation of the global complexity of the algorithm quite difficult, mainly because expressing the complexity of those "elementary" operations of monoid multiplications, membership testing, etc... in terms of the same parameters is not straightforward and in some cases even unknown as noted in \cite{butler1994inductive}.
However, we can at least compare this to the naive algorithm of testing if every element of $J$ is a fixed point which demands $O(n_Ln_R|H|^2|C_M|^2)$ monoid multiplications: as long as the complexity of the more complex operations of Green class or conjugacy class membership testing remains limited in terms of monoid multiplications, our complexity is better. For instance, in the case of the monoid $T_n$, all the required operations can be done on $O(n)$, making Algorithm \ref{algo:j_class} (and, in turn, Algorithm \ref{algo:M_char}) more efficient than the naive algorithm, as can be seen in Table \ref{tab:bichar}, with a sub linear (with respect to cardinality) measured complexity (Figure \ref{fig:plot_bichar}).

\begin{table}[!h]
	\centering
	
	\begin{tabular}{c|c|c|c|c}
		Monoid & Cardinality & Coefficients & Naive & Ours \\
		\hline
		$T_3$ & 27 & $6^2$ & 29 ms & 18 ms\\
		%$R(4,3)$ & 80 & $4^2$ & 35 ms & 12 ms\\
		%$R(5,3)$ & 150 & $11^2$ & 53 ms & 34 ms\\
		$T_4$ & 256 & $11^2$ & 92 ms & 63 ms\\
		%$R(5,4)$ & 754 & $14^2$ & 228 ms & 59 ms\\
		$T_5$ & 3125 & $18^2$ & 1.44 s & 113 ms\\
		%$R(6,5)$ & 5453 & $16^2$ & 2.00 s & 120 ms\\
		
		$T_6$ & 46656 & $29^2$ & 53.0 s & 0.34 s\\
		%$R(7,6)$ & 129337 & $19^2$ & 70.0 s & 2.23 s\\
		$T_7$ & 823543 & $44^2$ & >30 min & 1.59 s\\ 
		%$R(8,9)$ & 1765334 & $29^2$ & $\cdots$ & 43,7 s\\
		$T_8$ & 16777216 & $66^2$ & $\cdots$ & 8.86 s\\ 
		$T_9$ & 387420489 & $96^2$ & $\cdots$ & 56.7 s\\
	\end{tabular}
	
	\caption{Computation time of the regular representation bicharacter.}
	\label{tab:bichar}
\end{table}
	
	\subsection{Character table}\label{sub:perf_table}
	%\todo{would it be possible to visualize the contribution of the various optimisations: Sage naive (btw: name it after the algorithm, not the platform); plain formula; exploit the Green-structure; exploit the combinatorics of conjugates}

As shown in Table \ref{tab:char_tab}, the computation of the character table takes much longer. This is due to the fact that, to compute the radical of $\kbf\lc(e)$ for an idempotent $e$, we must solve a linear system of size $|\rc(e)|\times|\lc(e)|$ which necessitates $O(|\rc(e)|^2|\lc(e)|)$ arithmetic operations. In the case of the full transformation semigroup $T_n$, if $e$ as $k$ elements in its image, $|\lc(e)| = k! \times \binom{n}{k}$, while $|\rc(e)| = k! \times S(n,k)$ where $S(n, k)$ is a Stirling number of the second kind, which gives $|\rc(e)| \sim k^n$. The size of that linear system becomes rapidly untractable. Moreover, once we have a basis of $N_e(\kbf L)$ of cardinality $d$, we still have to compute the $C_M^2$ character values in $O(d^2)$ operations each. Experiments show that the computation time of the character tables of the maximal subgroups is small in comparison of all radical related computations. As can be seen on Figure \ref{fig:plot_char_table}, time and memory usage are in lockstep (at least for big enough monoids) and the limiting factor is memory (the test on random monoids fail for the random monoids of the form $R(9,8)$ by exceeding the 16GB memory capacity of our testing machine).

\begin{table}[!h]
	\centering
	
	\makebox[\textwidth][c]{
		\begin{tabular}{c|c|c|c}
			Monoid & Cardinality & Coefficients & Ours\\% & Monoid & Cardinality & Coefficients & Ours \\
			\hline
			$T_3$ & 27 & $6^2$ & 27 ms \\
			$T_4$ & 256 & $11^2$ & 151 ms\\
			$T_5$ & 3125 & $18^2$ & 1.74 s\\
			$T_6$ & 46656 & $29^2$ & 29.8 s\\
			$T_7$ & 823543 & $44^2$ & 11.0 min\\  
			%$R(4,3)$ & 80 & $4^2$ & 29 ms & 
			%$R(6,5)$ & 5453 & $16^2$ & 4.78 s\\
			%$R(5,3)$ & 150 & $11^2$ & 78 ms & 
			%$R(7,6)$ & 129337 & $19^2$ & 28.8 min\\
			%$R(5,4)$ & 754 & $14^2$ & 378 ms &
			% 823543 & & 2.369 &\\
		\end{tabular}
	}
	
	\caption{Computation time of the character table.}
	\label{tab:char_tab}
\end{table}

\vspace{-10mm}
\begin{figure}[h!]
	\centering
	
	\makebox[\textwidth][c]{
		\includegraphics[width = 0.6\textwidth]{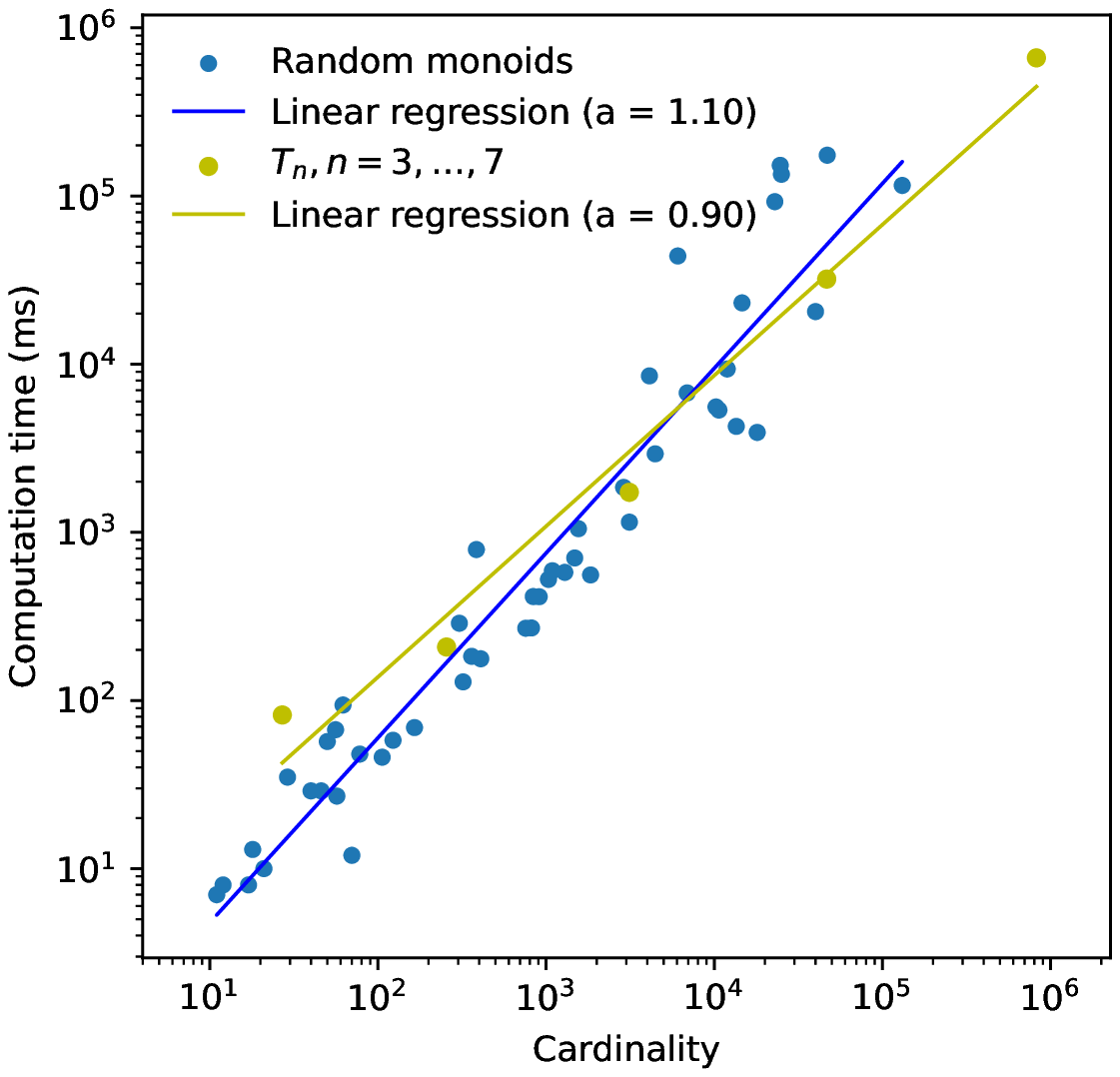}
		\hfill
		\includegraphics[width = 0.6\textwidth]{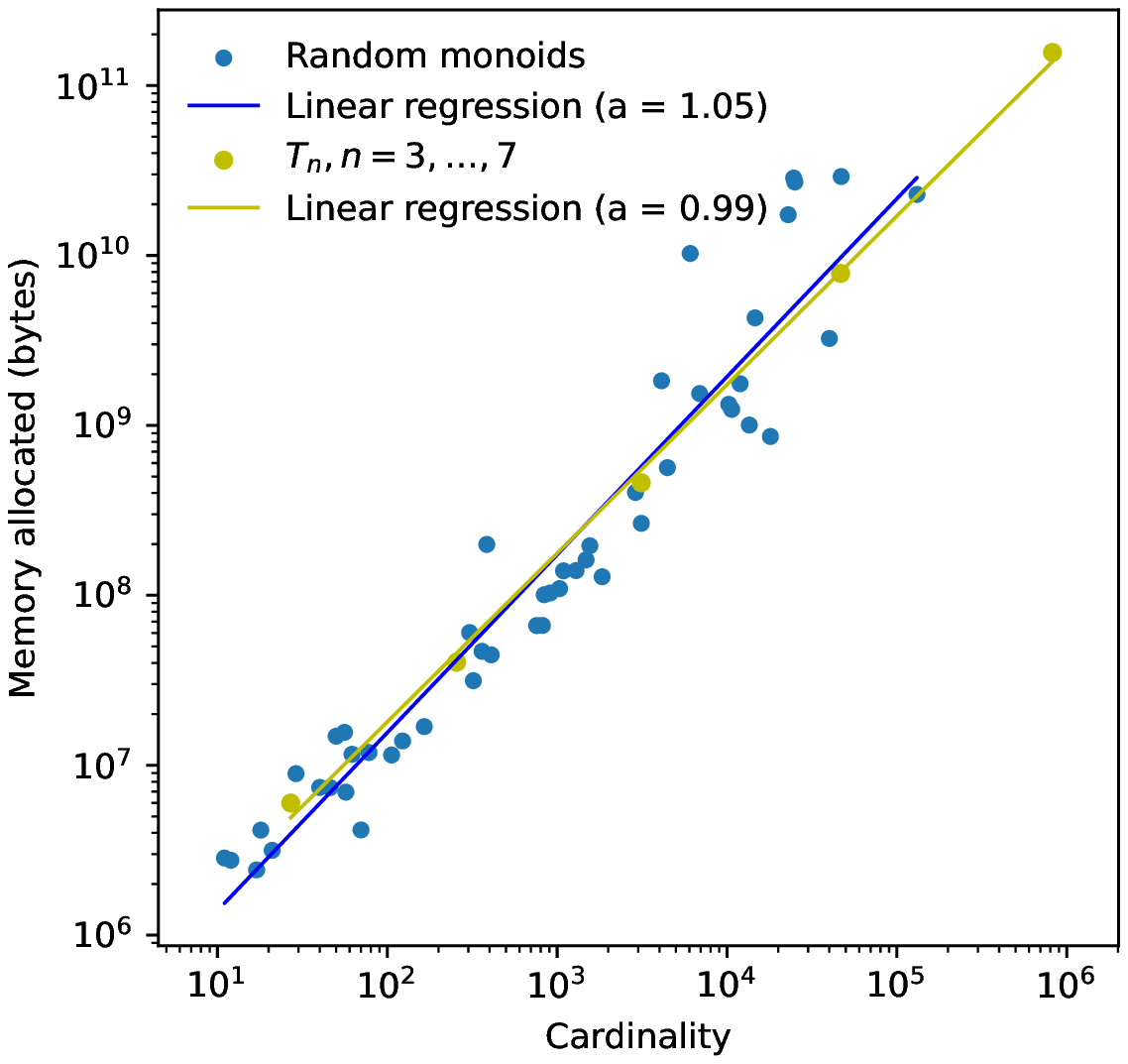}
	}
	
	\caption{Computation time and memory usage for computing the character table using Propositions \ref{prop:formula_char} and \ref{prop:equations}.
		\\
		The blue points correspond to the random monoids, the yellow ones to $T_n$ for $n \in \intint{3, 7}$. As before, the yellow points are excluded of the linear regression although in this case, $T_n$ behave more or less like the randomly chosen monoids. The measured complexity on random monoids is slighly more than linear in time and memory.}
	\label{fig:plot_char_table}
\end{figure}

%	We present in Figure \ref{fig:char_tab} the result of such a computation: the character table of $T_6$.
%	
%	%% Figure : bichar T_6
%	\input{./sections/algo/char_tab.tex}
	
	\subsection{Cartan matrix}\label{sub:perf_cartan}

	\section*{Conclusion and perspectives}

	The methods presented in this paper provide a new tool for the computational exploration of finite monoids representation theory. We give a method to compute the character table of a finite monoid in the general case as well as a method for the computation of the Cartan matrix. In the latter case, although general algorithms for any finite dimensional algebra already exist, by specializing to monoid algebras, we achieve vastly shorter computation times, thus making the question tractable for bigger monoids. Although we have presented the methods in details only for transformation monoids, the underlying formulas are true in general for finite monoids, and it remains computationally applicable whenever wherever the hypotheses of Section \ref{sec:Algos} are verified. We also invite the reader the consult and test our impementation, available on our github repository\footnote{github.com/ZoltanCoccyx/monoid-character-table}. As this paper is inspired by the combinatorial research on monoid representation theory which have seen renewed activity in recent years we hope that providing this effective tool will allows for the observation of new phenomena.
	
	This work has two natural continuations: improving and expanding.
	For the improvement part, we have noted that by far the most inefficient part of our algorithm is the computation of the radical of the $\lc$-classes, which happens to be the only point where linear algebra is necessary and combinatorics are seemingly not enough. 
	We can ask whether this step could be replaced by a combinatorial computation. 
	Some experiments show that, even in relatively small and very regular cases ($T_5$ for instance) the basis we find for the radical by solving the equation system described in Proposition \ref{prop:equations} does not have easily understandable structure, once the common denominator of the coefficients is eliminated. It therefore seem unlikely to us that a general method for computing the radical of an entirely combinatorial nature exists, although we remain optimistic that in very regular cases (again, $T_n$), the issue lies with us not finding the method rather than it not existing.
	More modestly, in a general context, we could try to exploit further the structure of the equations that define the radical to reduce the size of the system, which is a major bottleneck.

	Another improvement, although perhaps less impactful, could be made by exploiting redundancy: it can happen that two $\lc$-classes $L_1, L_2$ of a submonoid $M$ of $T_n$ are contained in the same $\lc$-class $L$ of $T_n$. Thus, in step $2-b$ of Algorithm \ref{algo:j_class} (for instance), instead of visiting each $\lc$ of $M$, we could visit each $\lc$ of $T_n$ that contain an $\lc$-class of $M$ and count them with some multiplicity. Although this probably would not lead to great improvements in efficiency, this has the advantage of making, in some sense, $T_n$ the worst case scenario, allowing for a finer complexity analysis.
	
	As for extending this work, the natural path seem to adapt these methods for fields of finite characteristic. At this point it appears to us that this question is tractable as the theory remains essentially the same, although it is somewhat difficult to implement in practice. The main hurdle arise, again, when computing the radical of an $\lc$-class: an equivalent of Proposition \ref{prop:rad_is_ne} would have to take into account the role of the radical of the maximal subgroup algebra, which can be non-trivial in positive characteristic. This would translate in needing to effectively compute this radical. Although algorithms are available (for instance in GAP), this is a theoretically difficult and computationally expensive problem, considerably reducing the maximum size of a tractable problem. While modular representation theory is known to be a difficult subject in groups it seems that, again, the situation is not much more complicated for monoids than it is for groups as it is standard practice to reduce monoid theoretic questions to group theoretic ones. Treating modular group representation theory as a black box coming with already existing algorithms (much as we did here for null characteristic group representation theory as a matter of fact), we hope to be able to provide a modular version of our algorithms along with an implementation in the near future.
	
	\section*{Acknowledgements}
	The research work devoted to this project was funded by a PhD grant from the French \emph{Ministère de la recherche et de l'enseignement supérieur}, in the form of a \emph{Contrat Doctoral Spécifique Normalien} attributed for a PhD in the STIC (\emph{Sciences et Technologies de l'Information et de la Communication}) doctoral school of Paris-Saclay University, in the LISN (\emph{Laboratoire Interdisciplinaire des Sciences du Numérique}) under the supervision of Pr. Nicolas Thiéry.
	
	\clearpage
	
	%\bibliographystyle{abbrv}
	
	%\bibliography{biblio}
	\printbibliography
	
\end{document}